\newtheorem{theorem}{Theorem}[section]
\newtheorem{remark}[theorem]{Remark}
\newtheorem{example}[theorem]{Example}
\newtheorem{corollary}[theorem]{Corollary}
\newtheorem{lemma}[theorem]{Lemma}
\newtheorem{definition}[theorem]{Definition}
\newproof{pf}{Proof}
\begin{document}
\sloppy{}
	
\begin{frontmatter}
\title{Asymptotic stability and exponential stability for a class of impulsive neutral differential equations with discrete and distributed delays} 
\author{Jinyuan Pan}
\author{Guiling Chen\corref{cor1}}
\ead{guiling@swjtu.edu.cn}
\cortext[cor1]{Corresponding author}
\address{School of Mathematics, Southwest Jiaotong University, Chengdu 610031, PR China}	
\begin{abstract}
In this paper, we present sufficient conditions for asymptotic stability and exponential stability of a class of impulsive neutral differential equations with discrete and distributed delays. Our approaches are based on the method using fixed point theory, which do not resort to any Lyapunov functions or Lyapunov functionals. Our conditions do not require the differentiability of delays, nor do they ask for a fixed sign on the coefficient functions. Our results improve some previous ones in the literature. Examples are given to illustrate our main results. 
\end{abstract}

\begin{keyword}
Fixed point theory\sep Asymptotic stability \sep Exponential stability \sep Impulsive neutral differential equations\sep Delays
\end{keyword}
	
\end{frontmatter}

\section{Introduction}

Many real world problems in science and engineering can be modelled by neutral delay differential equations, such as delayed cellular neural network models\cite{ZYT-1, ZYT-2} and heat conduction in materials with decay memory\cite{AL}. 
The existence, uniqueness and stability problems of the neutral delay differential equations have been investigated by many authors, for example, Afonso et al.\cite{ABS}, Jin and Luo\cite{JL}, Mesmouli\cite{MAD} and Raffoul\cite{R-1, R-2}, etc. 

Lyapunov's direct method has been very effective in establishing stability results for a wide variety of differential equations. The success of Lyapunov's direct method depends on finding a suitable Lyapunov function or Lyapunov functional. However, it may be difficult to look for a good Lyapunov functional for some classes of delay differential equations. Therefore, an alternative may be explored to overcome some difficulties. It was proposed by Burton\cite{TAB} and his co-workers to use fixed point methods to study the stability problems for deterministic systems. Afterwards, a great number of classes of delay differential equations are studied by this method, see, for example, \cite{BZ, AD, GOS1, GOS2, GOS3}. It turned out that the fixed point method is a powerful technique in dealing with stability problems of deterministic delay differential equations. Furthermore, this approach possesses the advantage that it can yield the existence, uniqueness and stability criteria of the considered system in one step.

In addition to delay effects, impulsive effects are also likely to exist in some systems, which could stabilize or destabilize the systems. Therefore, it is necessary to take delay effects and impulsive effects into account on dynamical systems. Recently, many research have studied the stability of impulsive delay differential equations and obtained interesting results, for example, Mesmouli\cite{MBM}, Yan and zhao\cite{YJ}, Liu and Ramirez\cite{LXZ}, etc. 

To the best of author's knowledge, the fixed point method is mainly used to deal with the stability for scalar deterministic differential equations. However, there is not much work discussing stability behaviors of n-dimensional neutral delayed systems with variable coefficients. In this paper, we address asymptotic stability and exponential stability of a class of n-dimensional impulsive neutral differential equations with variable coefficients. 

This paper is organized as follows. The model is described and some basic preliminaries are presented in Section 2. Asymptotic stability of the system is studied in Section 3. Exponential stability of the system is investigated in Section 4. Examples are given to illustrate our main results in Section 5. 

\section{Model description and preliminaries}
Let $\mathbb{R}^{n}$ denote the n-dimensional Euclidean space and let $\left\|\cdot\right\|$
represent the 1-norm defined as the sum of the absolute values of its elements. $\mathbb{R}^{+}=[0, +\infty)$. 
$C(X, Y)$ corresponds to the space of continuous mappings
from the topological space $X$ to the topological space $Y$. 

We consider a class of nonlinear impulsive neutral delayed system with discrete and distributed delays
\begin{eqnarray}\label{main equation}
&&\left\{
\begin{array}{ll}
d\left[x_{i}(t)-\sum_{j=1}^{n}q_{ij}(t)x_{j}(t-\tau(t))\right]\\
\quad =\left[\sum_{j=1}^{n}c_{ij}(t)x_{j}(t)+\sum_{j=1}^{n}a_{ij}(t)f_{j}(x_{j}(t))+\sum_{j=1}^{n}b_{ij}(t)g_{j}(x_{j}(t-\delta(t)))
+\sum_{j=1}^{n}w_{ij}(t)\int_{t-r(t)}^{t}h_j(x_{j}(s))\, ds\right]\, dt, \\

\qquad\qquad\qquad\qquad\qquad\qquad\qquad\qquad\qquad\qquad\qquad\qquad\qquad\qquad\qquad\qquad\quad t\geq 0, \quad t\neq t_{k}, \\&\\

\Delta x_i(t_{k})=x_i(t_{k})-x_i(t^{-}_{k}), \quad t=t_{k}, \quad k=1, 2, 3, \cdots, \quad i=1, 2, 3, \cdots, n. 
\end{array}
\right. 
\end{eqnarray}
This can be written in a vector-matrix form as follows
\begin{eqnarray}\label{main equation1}
&&\left\{
\begin{array}{ll}
d\left[x(t)-Q(t)x(t-\tau(t))\right]=\left[C(t)x(t)+A(t)f(x(t))+B(t)g(x(t-\delta(t)))+W(t)\int_{t-r(t)}^{t}h(x(s))\, ds\right]\, dt, \\
\qquad\qquad\qquad\qquad\qquad\qquad\qquad\qquad\qquad\qquad\qquad\qquad\qquad\qquad\qquad\qquad t\geq 0, \quad t\neq t_{k}, \\&\\
\Delta x(t_{k})=x(t_{k})-x(t^{-}_{k}), \quad t=t_{k}, \quad k=1, 2, 3, \cdots, 
\end{array}
\right. 
\end{eqnarray}
where $ {x}(t)=(x_{1}(t), x_{2}(t), \cdots, x_{n}(t))^{T}\in C(\mathbb{R}^{+}, {\mathbb{R}}^{n}) $, $ Q(t)=(q_{ij}(t))_{n\times n} $, $ C(t)=(c_{ij}(t))_{n\times n} $, $ A(t)=(a_{ij}(t))_{n\times n} $, $ B(t)=(b_{ij}(t))_{n\times n} $, $ W(t)=(w_{ij}(t))_{n\times n} $, and $a_{ij}(t)$, $ b_{ij}(t)$, $c_{ij}(t)$, $w_{ij}(t)$, $q_{ij}(t)\in C(\mathbb{R}^{+}, \mathbb{R})$, $f(x(t))=(f_{1}(x_{1}(t)),$ $ f_{2}(x_{2}(t)), \cdots, f_{n}(x_{n}(t)))^{T}\in C(\mathbb{R}^{+}, {\mathbb{R}}^{n}) $, $ g(x(t))=(g_{1}(x_1(t)),  g_{2}(x_2(t)), \cdots, g_{n}(x_n(t)))^{T}\in C(\mathbb{R}^{+}, {\mathbb{R}}^{n}) $, $ h(x(t))=(h_{1}(x_1(t)),\,  h_{2}(x_2(t)), \cdots,  h_{n}(x_n(t)))^{T}\in C(\mathbb{R}^{+}, {\mathbb{R}}^{n}) $. $ \Delta x_{i}(t_{k})=x_{i}(t_{k}^+)-x_{i}(t_{k}^-)$ is the impulse at moment $ t_{k} $, and $ t_{1}<t_{2}<\cdots $ is a strictly increasing sequence such that $ \lim_{k\rightarrow \infty}t_{k}=+\infty$. $ x_{i}(t_{k}^+) $ and $ x_{i}(t_{k}^-) $ stand for the right-hand and left-hand limit of $ x_{i}(t) $ at $ t=t_{k} $. $\tau(t)$, $ \delta(t)$ and $ r(t)$ are nonegative continuous functions. Denote that $ \vartheta=\inf_{t\geq 0}\{ t-\tau(t), t-\delta(t), t-r(t)\} $. 

The initial condition for the system (\ref{main equation1}) is given by
\begin{eqnarray}\label{initial}
	&x(t)=\varphi(t), \quad \quad t\in[\vartheta, 0], &
\end{eqnarray}
where $ \varphi(t)=(\varphi_{1}(t), \varphi_{2}(t), \cdots, \varphi_{n}(t))^{T} \in C([\vartheta, 0], \mathbb{R}^{n}) $ is a continuous function with the norm defined by 
$$ \left\| \varphi\right\|=\sum_{i=1}^{n}\sup_{\vartheta\leq t\leq 0}|\varphi_{i}(t)|. $$

The solution $ x(t)\coloneqq x(t, 0, \varphi) $ of the system (\ref{main equation}) is, for the time $ t $, a piecewise continuous vector-valued function with the first kind discontinuity at the points $ t_{k} $ ($ k=1, 2, \cdots $), where it is right continuous, i.e. , 
\begin{eqnarray*}
x_{i}(t_{k}^{+})=x_{i}(t_{k}), \qquad x_{i}(t_{k})=x_{i}(t_{k}^{-})+\Delta x_{i}(t_{k}), \qquad i=1, 2, \cdots n, \qquad k=1, 2, \cdots. 
\end{eqnarray*}

\begin{definition}
		The trivial solution $x=0$ of the system {\rm{(\ref{main equation})}} is said to be stable, if, for any $\varepsilon>0$, there exists $\delta >0$ such that for any initial condition $\varphi\in C([\vartheta, 0], \mathbb{R}^{n})$ satisfying $\left\|\varphi\right\| < \delta$, $\left\|x(t, 0, \varphi) \right\|<\varepsilon $, $t\geq0$. 
\end{definition}
\begin{definition}
		The trivial solution $x=0$ of the system {\rm{(\ref{main equation})}} is said to be asymptotically stable if the trivial solution $x=0$ is stable, and for any initial condition $\varphi\in C([\vartheta, 0], \mathbb{R}^{n})$, $\lim_{t\rightarrow\infty}\|x(t, 0, \varphi)\|=0$ holds. 
\end{definition}
\begin{definition}
		The trivial solution $x=0$ of the system {\rm{(\ref{main equation})}} is said to be globally exponentially stable if there exists a pair of constants $\lambda>0$ and $C>0$ such that $\|x(t, 0, \varphi)\|\leq Ce^{-\lambda t}\|\varphi\|$ for $t\geq0$, where $\varphi\in C([\vartheta, 0], \mathbb{R}^{n})$. 
\end{definition}
\begin{theorem}\rm{\textbf{(Banach fixed point theorem)}}
		Let $(\mathcal{S}, \rho)$ be a complete metric space and let $P:\mathcal{S}\to \mathcal{S}$. If there is a constant $\alpha<1$ such that for each pair $\phi_1, \phi_2\in \mathcal{S}$ we have 
		\begin{eqnarray*}
			\rho(P\phi_1-P\phi_2)\leq \alpha\rho(\phi_1-\phi_2), 
		\end{eqnarray*}
		then there is one and only one point $\phi \in \mathcal{S}$ with $P\phi=\phi$. 
	\end{theorem}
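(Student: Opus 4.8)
The plan is to use the classical method of successive approximations (Picard iteration) adapted to the metric-space setting. First I would fix an arbitrary starting point $\phi_0 \in \mathcal{S}$ and define the orbit $\phi_{n+1} = P\phi_n$ for $n \geq 0$. The goal is to show that this sequence converges and that its limit is the desired fixed point; the hypotheses $\alpha < 1$ and completeness of $\mathcal{S}$ will be invoked at precisely the point where convergence is established.

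The first key step is to control the consecutive distances. By the contraction hypothesis, $\rho(\phi_{n+1}, \phi_n) = \rho(P\phi_n, P\phi_{n-1}) \leq \alpha\,\rho(\phi_n, \phi_{n-1})$, so an easy induction gives $\rho(\phi_{n+1}, \phi_n) \leq \alpha^n \rho(\phi_1, \phi_0)$. Next, for any $m > n$ the triangle inequality together with the geometric sum yields $\rho(\phi_m, \phi_n) \leq \sum_{k=n}^{m-1} \alpha^k \rho(\phi_1, \phi_0) \leq \frac{\alpha^n}{1-\alpha}\,\rho(\phi_1, \phi_0)$. Here the assumption $\alpha < 1$ is essential: it makes the geometric tail $\alpha^n/(1-\alpha)$ tend to $0$ as $n \to \infty$, which shows that $(\phi_n)$ is a Cauchy sequence. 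By completeness of $(\mathcal{S}, \rho)$ the sequence then converges to some $\phi \in \mathcal{S}$.

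To see that $\phi$ is a fixed point, I would note that a contraction is Lipschitz, hence continuous, so letting $n \to \infty$ in $\phi_{n+1} = P\phi_n$ gives $\phi = P\phi$; alternatively one estimates $\rho(\phi, P\phi) \leq \rho(\phi, \phi_{n+1}) + \rho(P\phi_n, P\phi) \leq \rho(\phi, \phi_{n+1}) + \alpha\,\rho(\phi_n, \phi)$ and sends $n \to \infty$. For uniqueness, suppose $\phi$ and $\psi$ both satisfy $P\phi = \phi$ and $P\psi = \psi$; then $\rho(\phi, \psi) = \rho(P\phi, P\psi) \leq \alpha\,\rho(\phi, \psi)$, whence $(1-\alpha)\rho(\phi, \psi) \leq 0$ and therefore $\rho(\phi, \psi) = 0$, i.e. $\phi = \psi$.

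Since the result is classical, no step presents a genuine obstacle; the one place that truly requires the full strength of the hypotheses is establishing the Cauchy property, where both $\alpha < 1$ (for summability of the geometric series) and the completeness of $\mathcal{S}$ (to guarantee the limit lies in $\mathcal{S}$) are indispensable. The continuity of $P$, used to identify the limit as a fixed point, comes for free from the contraction inequality.
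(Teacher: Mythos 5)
Your proof is correct: it is the classical successive-approximations argument (Cauchy estimate via the geometric series, completeness to extract the limit, continuity of the contraction to identify it as a fixed point, and the standard uniqueness estimate). The paper states this theorem as a known preliminary and gives no proof of its own, so there is nothing to compare against; your argument is the canonical one and fills that role adequately.
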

\begin{lemma}
		$x(t)$ is a solution of the equation{\rm{ (\ref{main equation}) }}if and only if 
		\begin{eqnarray}\label{inteqn}
			x_i(t)&=&\sum_{j=1}^{n}q_{ij}(t)x_{j}(t-\tau(t))+\left[\varphi_i(0)-\sum_{j=1}^{n}q_{ij}(0)x_{j}(-\tau(0))\right]e^{-\int_{0}^{t}v_i(s)ds}+\sum_{0\leq t_k\leq t}I_{ik}(t_{k}, (Fx)_i(t_{k}))e^{-\int_{t_k}^{t}v_i(s)ds}\nonumber\\
			&&+\int_{0}^{t}e^{-\int_{s}^{t}v_i(u)du}\left[\sum_{j=1}^{n}\overline{c}_{ij}(s)x_{j}(s)+\sum_{j=1}^{n}a_{ij}(s)f_{j}(x_{j}(s))+\sum_{j=1}^{n}b_{ij}(s)g_{j}(x_{j}(s-\delta(s)))\right. \\
			&&\left. +\sum_{j=1}^{n}w_{ij}(s)\int_{s-r(s)}^{s}h_{j}(x_{j}(u))\, du-\sum_{j=1}^{n}v_i(s)q_{ij}(s)x_j(s-\delta(s))\right]ds\nonumber, 
		\end{eqnarray}
		where $(Fx)_i(t)=x_{i}(t)-\sum_{j=1}^{n}q_{ij}(t)x_{j}(t-\tau(t))$, $I_{ik}(t_{k}, (Fx)_i(t_{k}))= (Fx)_i(t_{k})- (Fx)_i(t^{-}_{k})$, $\overline{c}_{ij}(t)={c}_{ij}(t)$ $(i\neq j)$, $\overline{c}_{ii}(t)={c}_{ii}(t)+v_i(t)$. 
	\end{lemma}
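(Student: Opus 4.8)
The plan is to prove the equivalence by a variation-of-parameters argument organized around a suitably chosen integrating factor, handling the two implications of the ``if and only if'' separately. The whole construction hinges on the auxiliary functions $v_i(t)$: they are free functions introduced precisely so that the neutral derivative $\frac{d}{dt}(Fx)_i(t)$ can be recast as a linear first-order equation whose homogeneous solution is the decaying kernel $e^{-\int_0^t v_i(s)\,ds}$, and so that the diagonal coefficient $c_{ii}$ is shifted to $\overline{c}_{ii}=c_{ii}+v_i$.

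For the forward implication, I would start from a solution $x(t)$ of (\ref{main equation}) and work on each impulse-free interval $(t_k,t_{k+1})$. With $(Fx)_i(t)=x_i(t)-\sum_j q_{ij}(t)x_j(t-\tau(t))$, the equation reads $(Fx)_i'(t)=R_i(t)$, where $R_i(t)$ denotes the bracketed forcing term. The key algebraic move is to add $v_i(t)(Fx)_i(t)$ to both sides: the left-hand side then becomes $e^{-\int_0^t v_i}\,\frac{d}{dt}\!\left[(Fx)_i(t)\,e^{\int_0^t v_i}\right]$, while on the right the summand $v_i(t)x_i(t)$ merges with $c_{ii}(t)x_i(t)$ into $\overline{c}_{ii}(t)x_i(t)$ and the neutral contribution $-v_i(t)\sum_j q_{ij}(t)x_j(t-\tau(t))$ appears as the extra term. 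This reproduces exactly the integrand of (\ref{inteqn}).

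Next I would integrate $\frac{d}{dt}\!\left[(Fx)_i(t)\,e^{\int_0^t v_i}\right]=e^{\int_0^t v_i}\,(\text{integrand})_i(t)$ over $[0,t]$ by the piecewise-$C^1$ fundamental theorem of calculus for functions with first-kind discontinuities: the total increment equals the integral of the classical derivative plus the sum of the jumps, where the jump of $(Fx)_i\,e^{\int_0^{\cdot}v_i}$ at $t_k$ equals $I_{ik}(t_k,(Fx)_i(t_k))\,e^{\int_0^{t_k}v_i}$. Multiplying through by $e^{-\int_0^t v_i}$, using $(Fx)_i(0)=\varphi_i(0)-\sum_j q_{ij}(0)x_j(-\tau(0))$, and moving $\sum_j q_{ij}(t)x_j(t-\tau(t))$ back to the right then gives (\ref{inteqn}). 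For the converse I would differentiate (\ref{inteqn}) on each interval $(t_k,t_{k+1})$, where the jump sum is locally constant, recover the original ODE, and finally verify that crossing each $t_k$ reproduces the jump $I_{ik}$ and that the initial data hold at $t=0$.

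I expect the main obstacle to be the impulsive bookkeeping rather than the analytic estimates. Keeping the running sum $\sum_{0\le t_k\le t}I_{ik}\,e^{-\int_{t_k}^t v_i}$ consistent in both directions is the delicate part: one must check that differentiating it contributes nothing on the open intervals yet produces exactly $I_{ik}$ across each $t_k$, and that the kernel factors $e^{-\int_{t_k}^t v_i}$ are tracked correctly as the upper limit $t$ passes an impulse point. A secondary point is to confirm that the shifted arguments $t-\tau(t),\,t-\delta(t),\,t-r(t)$ lie in the region where $x$ (through $\varphi$ on $[\vartheta,0]$) is already defined, so that every term appearing in (\ref{inteqn}) is meaningful.
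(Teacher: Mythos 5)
Your proposal is correct and follows essentially the same route as the paper: introduce the auxiliary functions $v_i$, absorb $v_i x_i$ into $\overline{c}_{ii}$, multiply by the integrating factor $e^{\int_0^t v_i(s)\,ds}$, and account for the jumps $I_{ik}$ when integrating across the impulse times. The only (purely presentational) difference is that you invoke the piecewise fundamental theorem of calculus once to collect all jumps, whereas the paper integrates interval by interval from $t_{k-1}$ to $t_k^-$ and recovers the same sum by recursive back-substitution; your explicit treatment of the converse direction is if anything more complete than the paper's, which leaves it implicit.
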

	\begin{proof}
		We rewrite equation (\ref{main equation}) as
		\begin{eqnarray}\label{equvilent}
			d\left[x_{i}(t)-\sum_{j=1}^{n}q_{ij}(t)x_{j}(t-\tau(t))\right]&=&\Bigg[-v_i(t)x_{i}(t)+\sum_{j=1}^{n} \overline{c}_{ij}(t)x_{j}(t)+\sum_{j=1}^{n}a_{ij}(t)f_{j}(x_{j}(t))\nonumber\\
			 &&+\sum_{j=1}^{n}b_{ij}(t)g_{j}(x_{j}(t-\delta(t)))+\sum_{j=1}^{n}w_{ij}(t)\int_{t-r(t)}^{t}h_{j}(x_{j}(s))\, ds\Bigg]\, dt, \quad t\neq t_{k}, 
		\end{eqnarray}
		where $\overline{c}_{ij}(t)={c}_{ij}(t)$ $(i\neq j)$, $\overline{c}_{ii}(t)={c}_{ii}(t)+v_i(t)$, $v_i(t)$ is an auxiliary function we have chosen. \\
		
		Multiply both sides of (\ref{equvilent}) by $e^{\int_{0}^{t}v_i(s)\, ds}$, and integrate from $t_{k-1}$ to $t\in\left[t_{k-1}, t_k\right)\left(k=1, 2, 3, \cdots \right)$, we obtain
		\begin{eqnarray*}
			&&\left(x_{i}(t)-\sum_{j=1}^{n}q_{ij}(t)x_{j}(t-\tau(t))\right)e^{\int_{0}^{t}v_i(s)\, ds}-\left(x_{i}(t_{k-1})-\sum_{j=1}^{n}q_{ij}(t_{k-1})x_{j}(t_{k-1}-\tau(t_{k-1}))\right)e^{\int_{0}^{t_{k-1}}v_i(s)\, ds}\\
			&&\quad=\int_{t_{k-1}}^{t}\left[\sum_{j=1}^{n} \overline{c}_{ij}(s)x_{j}(s)+\sum_{j=1}^{n}a_{ij}(s)f_{j}(x_{j}(s))+\sum_{j=1}^{n}b_{ij}(s)g_{j}(x_{j}(s-\delta(s)))\right. \\
			&&\quad\quad+\left. \sum_{j=1}^{n}w_{ij}(s)\int_{s-r(s)}^{s}h_{j}(x_{j}(u))\, du-\sum_{j=1}^{n}v_i(s)q_{ij}(s)x_{j}(s-\tau(s))\right]e^{\int_{0}^{s}v_i(u)\, du}\, ds. 
		\end{eqnarray*}
		
		Thus, for $t\in\left[t_{k-1}, t_k\right)$, by putting $Fx:C(\mathbb{R}^{+}, {\mathbb{R}}^{n})\rightarrow C(\mathbb{R}^{+}, {\mathbb{R}}^{n})$, $ (Fx)_i(t)=x_{i}(t)-\sum_{j=1}^{n}q_{ij}(t)x_{j}(t-\tau(t))$, we obtain the following
		\begin{eqnarray}\label{B t}
		&&(Fx)_i(t)e^{\int_{0}^{t}v_i(s)\, ds}-(Fx)_i(t_{k-1})e^{\int_{0}^{t_{k-1}}v_i(s)\, ds}\nonumber\\
		&&\quad=(Fx)_i(t)e^{\int_{0}^{t}v_i(s)\, ds}-\left[I_{i(k-1)}(t_{k-1}, (Fx)_i(t_{k-1}))+(Fx)_i(t_{k-1}^-)\right]e^{\int_{0}^{t_{k-1}}v_i(s)\, ds}\nonumber\\
		&&\qquad+\int_{t_{k-1}}^{t}\left[\sum_{j=1}^{n} \overline{c}_{ij}(s)x_{j}(s)+\sum_{j=1}^{n}a_{ij}(s)f_{j}(x_{j}(s))+\sum_{j=1}^{n}b_{ij}(s)g_{j}(x_{j}(s-\delta(s)))\right. \\
			 &&\qquad+\left. \sum_{j=1}^{n}w_{ij}(s)\int_{s-r(s)}^{s}h_{j}(x_{j}(u))\, du-\sum_{j=1}^{n}v_i(s)q_{ij}(s)x_{j}(s-\tau(s))\right]e^{\int_{0}^{s}v_i(u)\, du}\, ds\nonumber, 
		\end{eqnarray}
		where $I_{ik}(t_{k}, (Fx)_i(t_{k})):C(\mathbb{R}^{+}, {\mathbb{R}}^{n})\rightarrow \mathbb{R}$, $I_{ik}(t_{k}, (Fx)_i(t_{k}))= (Fx)_i(t_{k})- (Fx)_i(t^{-}_{k})$. 
		
		Set $t=t_k-\varepsilon\left(\varepsilon>0\right)$ in (\ref{B t}), 
		\begin{eqnarray*}
			(Fx)_i(t_k-\varepsilon)e^{\int_{0}^{t_k-\varepsilon}v_i(s)\, ds}-(Fx)_i(t_{k-1})e^{\int_{0}^{t_{k-1}}v_i(t)}
			&=&\int_{t_{k-1}}^{t_k-\varepsilon}\left[\sum_{j=1}^{n} \overline{c}_{ij}(s)x_{j}(s)+\sum_{j=1}^{n}a_{ij}(s)f_{j}(x_{j}(s))\right. \\
			&&+\sum_{j=1}^{n}b_{ij}(s)g_{j}(x_{j}(s-\delta(s)))
			+\sum_{j=1}^{n}w_{ij}(s)\int_{s-r(s)}^{s}h_{j}(x_{j}(u))\, du\\
			&&\left. -\sum_{j=1}^{n}v_i(s)q_{ij}(s)x_{j}(s-\tau(s))\right]e^{\int_{0}^{s}v_i(u)\, du}\, ds, 
		\end{eqnarray*}
		and let $\varepsilon\rightarrow0$, we obtain
		\begin{eqnarray*}
			 (Fx)_i(t_k^{-})e^{\int_{0}^{t_k}v_i(s)\, ds}-(Fx)_i(t_{k-1})e^{\int_{0}^{t_{k-1}}v_i(s)\, ds}
			&=&\int_{t_{k-1}}^{t_k}\left[\sum_{j=1}^{n} \overline{c}_{ij}(s)x_{j}(s)+\sum_{j=1}^{n}a_{ij}(s)f_{j}(x_{j}(s))\right. \\
			&&+\sum_{j=1}^{n}b_{ij}(s)g_{j}(x_{j}(s-\delta(s)))
			+\sum_{j=1}^{n}w_{ij}(s)\int_{s-r(s)}^{s}h_{j}(x_{j}(u))\, du\\
			&&\left. -\sum_{j=1}^{n}v_i(s)q_{ij}(s)x_{j}(s-\tau(s))\right]e^{\int_{0}^{s}v_i(u)\, du}\, ds. 
		\end{eqnarray*}
		
		Backstep in this way, we get
		\begin{eqnarray*}
			(Fx)_i(t_{k-1}^{-})e^{\int_{0}^{t_{k-1}}v_i(s)\, ds}-(Fx)_i(t_{k-2})e^{\int_{0}^{t_{k-2}}v_i(s)\, ds}
			&=&\int_{t_{k-2}}^{t_{k-1}}\left[\sum_{j=1}^{n} \overline{c}_{ij}(s)x_{j}(s)+\sum_{j=1}^{n}a_{ij}(s)f_{j}(x_{j}(s))\right. \\
			&&+\sum_{j=1}^{n}b_{ij}(s)g_{j}(x_{j}(s-\delta(s)))
			+\sum_{j=1}^{n}w_{ij}(s)\int_{s-r(s)}^{s}h_{j}(x_{j}(u))\, du\\
			&&\left. -\sum_{j=1}^{n}v_i(s)q_{ij}(s)x_{j}(s-\tau(s))\right]e^{\int_{0}^{s}v_i(u)\, du}\, ds, \\
			&\cdots&\\
			&\cdots&\\
			&\cdots&\\
		\end{eqnarray*}
		\begin{eqnarray*}
		(Fx)_i(t_{1}^{-})e^{\int_{0}^{t_{1}}v_i(s)\, ds}-(Fx)_i(0)
			&=&\int_{0}^{t_1}\left[\sum_{j=1}^{n} \overline{c}_{ij}(s)x_{j}(s)+\sum_{j=1}^{n}a_{ij}(s)f_{j}(x_{j}(s))\right. \\
			&&+\sum_{j=1}^{n}b_{ij}(s)g_{j}(x_{j}(s-\delta(s)))
			+\sum_{j=1}^{n}w_{ij}(s)\int_{s-r(s)}^{s}h_{j}(x_{j}(u))\, du\\
			&&\left. -\sum_{j=1}^{n}v_i(s)q_{ij}(s)x_{j}(s-\tau(s))\right]e^{\int_{0}^{s}v_i(u)\, du}\, ds. 
		\end{eqnarray*}
		By recursive substitution into (\ref{B t}), the solution $x(t)$ must satisfy (\ref{inteqn}) . 
	\end{proof}
	
To obtain our results, we suppose the following conditions are satisfied:
\begin{enumerate}
	\item[\rm{(A1)}] the delays $ \tau(t)$, $\delta(t)$ and $r(t)$ are continuous functions such that $ t-\tau(t)\rightarrow \infty $, $ t-\delta(t)\rightarrow \infty $ and $ t-r(t)\rightarrow \infty $ as $ t\rightarrow \infty $. 
	\item[{\rm{(A2)}}] For $ j=1, 2, 3, \cdots, n $, the mappings $f_{j}(\cdot) $, $g_{j}(\cdot) $, and $h_{j}(\cdot) $ satisfy $ f_{j}(0)\equiv 0 $, $ g_{j}(0)\equiv 0 $, $ h_{j}(0)\equiv 0 $ and are globally Lipschitz functions with Lipschitz constants $ \alpha_{j}$, $ \beta_{j} $, $ \gamma_{j} $. That is for any $x, y\in C(\mathbb{R}^{+}, {\mathbb{R}}^{n})$, $t\geq\vartheta$, $ j=1, 2, 3, \cdots, n $, 
	\begin{eqnarray*}
		\left|f_j(x_j(t))-f_j(y_j(t))\right|&\leq& \alpha_j \left|x_j(t)-y_j(t)\right|, \\
		\left|g_j(x_j(t))-g_j(y_j(t))\right|&\leq& \beta_j \left|x_j(t)-y_j(t)\right|, \\
		\left|h_j(x_j(t))-h_j(y_j(t))\right|&\leq& \gamma_j \left|x_j(t)-y_j(t)\right|. 
	\end{eqnarray*}
	\item[{\rm{(A3)}}] For $i=1, 2, \cdots, n$, $k=1, 2, 3, \cdots$, the mapping $I_{ik}(t_k, (F(\cdot))_i(t_k))$ satisfies $ I_{ik}(t_k, (F(0))_i(t_k))\equiv 0 $ and is a globally Lipschitz function with a Lipschitz constant $ p_{ik} $. That is for any $x, y\in C(\mathbb{R}^{+}, {\mathbb{R}}^{n})$, $i=1, 2, \cdots, n$, $k=1, 2, 3, \cdots$, 
	\begin{eqnarray*}
		\left|I_{ik}(t_k, (F(x))_i(t_k))-I_{ik}(t_k, (F(y))_i(t_k))\right|&\leq& p_{ik} \left\|x(t_k)-y(t_k)\right\|. 
	\end{eqnarray*}
\end{enumerate}
\section{Asymptotic Stability}
In this section, we study asymptotic stability of the system (\ref{main equation}) by employing the fixed point method. 

Let $\mathcal{H} =\mathcal{H}_1\times \cdots\times \mathcal{H}_n$, and let $\mathcal{H}_i (i=1, \cdots, n)$ be the space consisting of function $\phi_i(t):[\vartheta, \infty)\rightarrow\mathbb{R}$, where $\phi_{i}(t)$ satisfies the following:
\begin{enumerate}[(1)]
	\item $\phi_i(s)=\varphi_i(s)$ on $s\in[\vartheta, 0]$;
	\item $\phi_{i}(t)$ is continuous on $t\ne t_k(k=1, 2, \cdots)$;
	\item $\lim_{t\rightarrow t_k^{-}}\phi_{i}(t)$ and $\lim_{t\rightarrow t_k^{+}}\phi_{i}(t)$ exist, furthermore, $\lim_{t\rightarrow t_k^{+}}\phi_{i}(t)=\phi_i(t_k)$ for $k=1, 2, \cdots$;
	\item $\phi_{i}(t)\rightarrow0$ as $t\rightarrow\infty$, 
\end{enumerate}
where $t_k\left(k=1, 2, \cdots\right)$ and $\varphi_i(s)\left(s\in [-\vartheta, 0)\right)$ are defined as shown in Section 2. Also, if we define the metric as 
$d(\phi, \psi)=\sum_{i=1}^{n}\sup_{t\geq\vartheta}|\phi_{i}(t)-\psi_{i}(t)|, $
then $\mathcal{H}$ is a complete metric space. 
\begin{theorem}
	Consider the nonlinear impulsive neutral delayed system {\rm{(\ref{main equation})}}. Suppose that the assumptions {\rm{ (A1)-(A3)}} hold and the following conditions are satisfied:
	\begin{enumerate}[(i)]
		\item the delay $r(t)$ is bounded by a positive constant $\mu$;
		\item there exist constants $p_i$ such that $p_{ik}\leq p_i(t_k-t_{k-1})$ for $i=1, 2, \cdots, n$, and $k=1, 2, \cdots$;
		\item there exist constants $\eta_i>0$ such that $v_i(t)>\eta_i$, $t\in \mathbb{R}^{+}$ for $i=1, 2, \cdots, n$;
		\item and such that 
		\begin{eqnarray*}
			&&\sum\limits_{i=1}^{n}\Bigg[\max_{j=1, \cdots, n}\sup_{t\geq\vartheta}|q_{ij}(t)|+\Bigg(\max_{j=1, \cdots, n}\sup_{t\geq\vartheta}|\overline{c}_{ij}(t)|+\max_{j=1, \cdots, n}\sup_{t\geq\vartheta}|a_{ij}(t)\alpha_j|+\max_{j=1, \cdots, n}\sup_{t\geq\vartheta}|b_{ij}(t)\beta_j|\\
			&&\quad+\max_{j=1, \cdots, n}\sup_{t\geq\vartheta}| w_{ij}(t)\mu\gamma_j|+\max_{j=1, \cdots, n}\sup_{t\geq\vartheta}|q_{ij}(t)v_i(t)|+|p_i|\Bigg)\times\sup\limits_{t\geq\vartheta}\int_{0}^{t}e^{-\int_{s}^{t}v_i(u)du}ds\Bigg]\triangleq \rho<1. 
		\end{eqnarray*}
	\end{enumerate}
	Then the trivial solution $x=0$ of system {\rm{(\ref{main equation})}} is asymptotically stable. 
\end{theorem}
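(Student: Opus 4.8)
The plan is to recast the problem as a fixed point equation on the complete metric space $\mathcal{H}$ and invoke the Banach fixed point theorem. I would define the operator $P$ on $\mathcal{H}$ by setting $(P\phi)_i(s)=\varphi_i(s)$ for $s\in[\vartheta,0]$ and, for $t\ge0$, letting $(P\phi)_i(t)$ equal the right-hand side of the integral representation (\ref{inteqn}) with $x$ replaced throughout by $\phi$. By the Lemma, a function $x$ is a solution of (\ref{main equation}) with initial data $\varphi$ precisely when $x=Px$, so it suffices to produce a unique fixed point of $P$ in $\mathcal{H}$ and to check that it meets the stability requirements of Definitions 2.1 and 2.2.

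First I would verify that $P$ maps $\mathcal{H}$ into itself. Properties (1)--(3) in the definition of $\mathcal{H}$ are structural: the integral and initial-data terms are continuous in $t$, while the finite sum $\sum_{0\le t_k\le t}$ is a right-continuous step function jumping at each $t_k$, so $P\phi$ inherits the prescribed piecewise-continuous, right-continuous profile with first-kind discontinuities at the $t_k$. The substantive requirement is property (4), that $(P\phi)_i(t)\to0$ as $t\to\infty$, which I would treat term by term: the neutral term $\sum_j q_{ij}(t)\phi_j(t-\tau(t))$ vanishes because $\phi_j\to0$, $t-\tau(t)\to\infty$ by (A1), and $q_{ij}$ is bounded by (iv); the initial-data term carries the factor $e^{-\int_0^t v_i(s)\,ds}$, which tends to $0$ since $v_i>\eta_i>0$ by (iii); the integral term vanishes by the standard fact that convolving the kernel $e^{-\int_s^t v_i(u)\,du}$ of finite mass (again by (iii)) against integrands that decay at infinity -- here using (A2) with $f_j(0)=g_j(0)=h_j(0)=0$, the delays $\to\infty$ by (A1), and the bound $r(t)\le\mu$ from (i) for the distributed-delay piece -- produces a quantity that tends to $0$; and the impulsive sum is handled by an $\varepsilon$-splitting, bounding $|I_{ik}(t_k,(F\phi)_i(t_k))|\le p_{ik}\|\phi(t_k)\|$ via (A3) and $I_{ik}(t_k,(F(0))_i(t_k))=0$.

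Next I would establish the contraction. For $\phi,\psi\in\mathcal{H}$, subtracting the two representations and applying (A2)--(A3) term by term bounds $|(P\phi)_i(t)-(P\psi)_i(t)|$: the neutral term contributes $\max_j\sup|q_{ij}|\,d(\phi,\psi)$, while each remaining term contributes its coefficient bound ($\max_j\sup|\overline{c}_{ij}|$, $\max_j\sup|a_{ij}\alpha_j|$, $\max_j\sup|b_{ij}\beta_j|$, $\max_j\sup|w_{ij}\mu\gamma_j|$, $\max_j\sup|q_{ij}v_i|$) multiplied by $\sup_t\int_0^t e^{-\int_s^t v_i(u)\,du}\,ds$ and by $d(\phi,\psi)$. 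Summing over $i$ and taking suprema reproduces -- together with the impulsive contribution $|p_i|$ discussed below -- exactly the constant $\rho$ of condition (iv), so $d(P\phi,P\psi)\le\rho\,d(\phi,\psi)$ with $\rho<1$. The Banach fixed point theorem then yields a unique fixed point $x\in\mathcal{H}$, which by property (4) satisfies $\lim_{t\to\infty}\|x(t)\|=0$.

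Finally I would deduce stability in the sense of Definition 2.1. Running the same estimates on $x=Px$ but retaining the $\varphi$-dependent terms yields $d(x,0)\le c\|\varphi\|+\rho\,d(x,0)$ for a constant $c$ collecting the initial-segment bound, the factor $q_{ij}(0)$, and the bounded exponential, whence $d(x,0)\le c\|\varphi\|/(1-\rho)$; choosing $\delta=(1-\rho)\varepsilon/c$ gives $\|x(t)\|<\varepsilon$ whenever $\|\varphi\|<\delta$. Combined with $\|x(t)\|\to0$, this establishes asymptotic stability. The main obstacle I anticipate is the impulsive sum: both its decay and its contraction estimate require converting the jump constants $p_{ik}$ into $p_i(t_k-t_{k-1})$ through condition (ii) and then comparing $\sum_{0\le t_k\le t}(t_k-t_{k-1})e^{-\int_{t_k}^t v_i(u)\,du}$ with $\sup_t\int_0^t e^{-\int_s^t v_i(u)\,du}\,ds$ via a Riemann-sum comparison, which is the delicate point that ties the impulse term into the constant $\rho$.
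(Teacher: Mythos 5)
Your proposal is correct and follows essentially the same route as the paper: the same integral representation (via the Lemma) defines the operator, the same term-by-term verification gives invariance of $\mathcal{H}$ and the contraction constant $\rho$, and the same Riemann-sum comparison ties the impulsive sum into $\rho$ through condition (ii). The only cosmetic difference is in the final stability step, where you extract the a priori bound $d(x,0)\le c\|\varphi\|/(1-\rho)$ directly, while the paper argues by contradiction at a first time $t^{*}$ with $\|x(t^{*})\|=\varepsilon$; the two are the same estimate packaged differently.
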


\begin{proof}
	The following proof mainly relies on the Banach fixed point theorem, which will be divided into four steps. \\
	
	\textbf{Step 1}. Define an operator $\pi$ by 
	$$\pi(x)(t)=\left(\pi(x_1)(t), \cdots, \pi(x_n)(t)\right)^{T}, $$
	for $x(t)=\left(x_1(t), \cdots, x_n(t)\right)^{T}\in\mathcal{H}$, where $\pi(x_i)(t):[\vartheta, \infty)\rightarrow\mathbb{R}(i=1, 2, \cdots, n)$ obeys the rules as follows:
	\begin{eqnarray}\label{ASenq-1}
		\pi(x_i)(t)&=&\sum_{j=1}^{n}q_{ij}(t)x_{j}(t-\tau(t))+\left[\varphi_i(0)-\sum_{j=1}^{n}q_{ij}(0)\varphi_{j}(-\tau(0))\right]e^{-\int_{0}^{t}v_i(s)ds}+\sum_{0\leq t_k\leq t}I_{ik}(t_{k}, (Fx)_i(t_{k}))e^{-\int_{t_k}^{t}v_i(s)ds}\nonumber\\
		&&+\int_{0}^{t}e^{-\int_{s}^{t}v_i(u)du}\left[\sum_{j=1}^{n}\overline{c}_{ij}(s)x_{j}(s)+\sum_{j=1}^{n}a_{ij}(s)f_{j}(x_{j}(s))+\sum_{j=1}^{n}b_{ij}(s)g_{j}(x_{j}(s-\delta(s)))\right. \\
		&&\left. +\sum_{j=1}^{n}w_{ij}(s)\int_{s-r(s)}^{s}h_{j}(x_{j}(u))\, du-\sum_{j=1}^{n}v_i(s)q_{ij}(s)x_j(s-\tau(s))\right]ds, \nonumber
	\end{eqnarray}
	for $t\geq0$ and $\pi(x_i)(s)=\varphi_i(s)$ for $s\in[\vartheta, 0)$. 
	
	\textbf{Step 2}. We prove $\pi(\mathcal{H})\subseteq\mathcal{H}$. Choose $x_i(t)\in\mathcal{H}_i(i=1, 2, \cdots, n)$, it is necessary to testify $\pi (x_i)(t)\subseteq\mathcal{H}_i$. 
	First, since $\pi(x_i)(s)=\varphi(s)$ on $s\in [\vartheta, 0]$ and $\varphi(s)\in C([\vartheta, 0], \mathbb{R})$, we know $\pi(x_i)(s)$ is continuous on $s\in [\vartheta, 0]$. For a fixed time $t>0$, it follows from (\ref{ASenq-1}) that
	\begin{eqnarray}\label{con}
	\pi(x_i)(t+r)-\pi(x_i)(t)=R_1(t)+R_2(t)+R_3(t)+R_4(t), 
	\end{eqnarray}
	where
	\begin{eqnarray*}
		R_1(t)&=&\sum_{j=1}^{n}q_{ij}(t+r)x_{j}(t+r-\tau(t+r))-\sum_{j=1}^{n}q_{ij}(t)x_{j}(t-\tau(t)), \\
		R_2(t)&=&\left[\varphi_i(0)-\sum_{j=1}^{n}q_{ij}(0)\varphi_{j}(-\tau(0))\right]e^{-\int_{0}^{t+r}v_i(s)ds}-\left[\varphi_i(0)-\sum_{j=1}^{n}q_{ij}(0)\varphi_{j}(-\tau(0))\right]e^{-\int_{0}^{t}v_i(s)ds}, \\
		R_3(t)&=&\sum_{0\leq t_k\leq t+r}I_{ik}(t_{k}, (Fx)_i(t_{k}))e^{-\int_{t_k}^{t+r}v_i(s)ds}-\sum_{0\leq t_k\leq t}I_{ik}(t_{k}, (Fx)_i(t_{k}))e^{-\int_{t_k}^{t}v_i(s)ds}, \\
		R_4(t)&=&\int_{0}^{t+r}e^{-\int_{s}^{t+r}v_i(u)du}\left[\sum_{j=1}^{n}\overline{c}_{ij}(s)x_{j}(s)+\sum_{j=1}^{n}a_{ij}(s)f_{j}(x_{j}(s))+\sum_{j=1}^{n}b_{ij}(s)g_{j}(x_{j}(s-\delta(s)))\right. \\
		&&\left. + \sum_{j=1}^{n}w_{ij}(s)\int_{s-r(s)}^{s}h_{j}(x_{j}(u))\, du-\sum_{j=1}^{n}v_i(s)q_{ij}(s)x_j(s-\tau(s))\right]ds\\
		&&-\int_{0}^{t}e^{-\int_{s}^{t}v_i(u)du}\left[\sum_{j=1}^{n}\overline{c}_{ij}(s)x_{j}(s)+\sum_{j=1}^{n}a_{ij}(s)f_{j}(x_{j}(s))+\sum_{j=1}^{n}b_{ij}(s)g_{j}(x_{j}(s-\delta(s)))\right. \\
		&&\left. + \sum_{j=1}^{n}w_{ij}(s)\int_{s-r(s)}^{s}h_{j}(x_{j}(u))\, du-\sum_{j=1}^{n}v_i(s)q_{ij}(s)x_j(s-\tau(s))\right]ds. 
	\end{eqnarray*}
	
	It is clear that $x_{i}(t)$ is continuous on $t \neq t_{k}(k=1, 2, \cdots)$. Moreover, $\lim _{t \rightarrow t_{k}^{-}} x_{i}(t)$ and $\lim _{t \rightarrow t_{k}^{+}} x_{i}(t)$ exist, and $\lim _{t \rightarrow t_{k}^{+}} x_{i}(t)=x_{i}\left(t_{k}\right)$. we can check that $R_{i}(t) \rightarrow 0 $ as $r \rightarrow 0$ on $t \neq t_{k} $ $(i=1, 2, 3, 4)$, so $\pi\left(x_{i}\right)(t)$ is continuous on the fixed time $t \neq t_{k} (k=1, 2, \cdots)$ . 
	
	On the other hand, as $t=t_{k}(k=1, 2, \cdots)$ in (\ref{con}), it is not difficult to find that $R_{i}(t) \rightarrow 0$ as $r \rightarrow 0$ for $i=1, 2, 3, 4$. Furthermore, let $r>0$ be small enough, we derive
	\begin{eqnarray*}
		R_{3}(t)&= & e^{-a_{i}\left(t_{k}+r\right)} \sum_{0\leq t_{m }\leq\left(t_{k}+r\right)} I_{im}(t_m, (Fx)_i(t_m)) e^{a_{i} t_{m}}-e^{-a_{i} t_{k}} \sum_{0\leq t_{m}\leq t_{k}} I_{im}(t_m, (Fx)_i(t_m)) e^{a_{i} t_{m}} \\
		&= & \left(e^{-a_{i}\left(t_{k}+r\right)}-e^{-a_{i} t_{k}}\right) \sum_{0\leq t_{m}\leq t_{k}}\left\{I_{im}(t_m, (Fx)_i(t_m)) e^{a_{i} t_{m}}\right\}, 
	\end{eqnarray*}
	which implies $\lim _{r \rightarrow 0^{+}} R_{3}(t)=0$ as $t=t_{k}$ . 
	
	While letting $r<0$ tend to zero gives
	\begin{eqnarray*}
		R_{3}(t)&= & e^{-a_{i}\left(t_{k}+r\right)} \sum_{0\leq t_{m}\leq\left(t_{k}+r\right)} I_{im}(t_m, (Fx)_i(t_m)) e^{a_{i} t_{m}}-e^{-a_{i} t_{k}} \sum_{0\leq t_{m}\leq t_{k}} I_{im}(t_m, (Fx)_i(t_m)) e^{a_{i} t_{m}}\\
		&= & \left(e^{-a_{i}\left(t_{k}+r\right)}-e^{-a_{i} t_{k}}\right) \sum_{0\leq t_{m}\leq(t_{k}+r)}I_{im}(t_m, (Fx)_i(t_m)) e^{a_{i} t_{m}}-I_{i k}(t_k, (Fx)_i(t_{k})), 
	\end{eqnarray*}
which yields $\lim _{r \rightarrow 0^{-}} R_{3}(t)=-I_{i k}(t_k, (Fx)_i(t_{k})), $ as $t=t_{k}$ . 
	According to the above discussion, we find that $\pi\left(x_{i}\right)(t) : [\vartheta, \infty) \rightarrow \mathbb{R}$ is continuous on $t \neq t_{k}(k=1, 2, \cdots)$, moreover, $\lim _{t \rightarrow t_{k}^{-}} \pi\left(x_{i}\right)(t) and \lim _{t \rightarrow t_{k}^{+}} \pi\left(x_{i}\right)(t)$ exist, $\pi\left(x_{i}\right)\left(t_{k}\right) = \lim _{t \rightarrow t_{k}^{+}} \pi\left(x_{i}\right)(t)$ . 
	
	Next, we prove $\pi\left(x_{i}\right)(t) \rightarrow 0$ as $t \rightarrow \infty$ . For convenience, denote
	\begin{eqnarray*}
		\pi(x_i)(t)=S_1(t)+S_2(t)+S_3(t)+S_4(t), 
	\end{eqnarray*}
	where
	\begin{eqnarray}\label{S}
		S_1(t)&=&\sum_{j=1}^{n}q_{ij}(t)x_{j}(t-\tau(t)),\quad  S_2(t)=\left[\varphi_i(0)-\sum_{j=1}^{n}q_{ij}(0)x_{j}(-\tau(0))\right]e^{-\int_{0}^{t}v_i(s)\, ds}, \nonumber\\
		S_3(t)&=&\sum_{0\leq t_k\leq t}I_{ik}(t_{k}, (Fx)_i(t_{k}))e^{-\int_{t_k}^{t}v_i(s)\, ds}, \\
		S_4(t)&=&\int_{0}^{t}e^{-\int_{s}^{t}v_i(u)\, du}\left[\sum_{j=1}^{n}\overline{c}_{ij}(s)x_{j}(s)+\sum_{j=1}^{n}a_{ij}(s)f_{j}(x_{j}(s))+\sum_{j=1}^{n}b_{ij}(s)g_{j}(x_{j}(s-\delta(s)))\right. \nonumber\\
		&&\left. +\sum_{j=1}^{n}w_{ij}(s)\int_{s-r(s)}^{s}h_{j}(x_{j}(u))\, du-\sum_{j=1}^{n}v_i(s)q_{ij}(s)x_j(s-\tau(s))\right]ds. \nonumber
	\end{eqnarray}
	
	Since $t-\tau(t) \rightarrow \infty$ as $t \rightarrow \infty$ , we get $\lim _{t \rightarrow \infty} x_{j}\left(t-\tau(t)\right)=0 $. Then for any $\varepsilon>0$, there also exists a $T_{j}>0$ such that $t\geq T_{j}$ implies $\left|x_{j}\left(t-\tau(t)\right)\right|<\varepsilon $. Select $\overline{T}=\max _{j=1, \cdots, n}\left\{T_{j}\right\} $. It follows that
	\begin{eqnarray*}
		S_1(t)&\leq&\varepsilon\sum_{j=1}^{n}\sup_{ t\geq\vartheta}|q_{ij}(t)|, 
	\end{eqnarray*}
	which implies $S_1(t)\rightarrow0$ as $t\rightarrow\infty$. 
	
	By condition (iii), we have 
$$\int_{0}^{t}v_i(s)\, ds\rightarrow\infty\, \text{as}\, t\rightarrow\infty,$$
	which leads to 
$$S_2(t)\leq\left[\varphi_i(0)-\sum_{j=1}^{n}q_{ij}(0)\varphi_{j}(-\tau(0))\right]\varepsilon,$$
 which implies $S_2(t)\rightarrow0$ as $t\rightarrow\infty$. 
	
	Then for any $\varepsilon>0$, there exists a nonimpulsive point $T_i>0$ such that $t>T_i$ implies $|x_i(t)|<\varepsilon$. Then
	\begin{eqnarray*}
		S_3(t)&\leq&\sum_{0\leq t_k\leq T_i}p_i(t_k-t_{k-1}) e^{-\int_{t_k}^{t}v_i(s)ds}\sum_{j=1}^{n}|x_j(t_k)|+\sum_{T_i<t_k\leq t}p_i(t_k-t_{k-1}) e^{-\int_{t_k}^{t}v_i(s)ds}\sum_{j=1}^{n}|x_j(t_k)|\\
		&\leq&e^{-\int_{0}^{t}v_i(s)ds}\sum_{0\leq t_k\leq T_i}p_i(t_k-t_{k-1}) e^{-\int_{t_k}^{0}v_i(s)ds}\sum_{j=1}^{n}|x_j(t_k)|+np_i\varepsilon\frac{1}{\eta_i}-np_i\varepsilon\frac{1}{\eta_i}e^{-\eta_it+\eta_iT_i}, 
	\end{eqnarray*}
	which implies $S_3(t)\rightarrow0$ as $t\rightarrow\infty$. 
	
	Since $ t-\tau(t)\rightarrow \infty $, $ t-\delta(t)\rightarrow \infty $ and $ t-r(t)\rightarrow \infty $ as $ t\rightarrow \infty $, we get $\lim _{t \rightarrow \infty} x_{j}\left(t-\tau(t)\right)=0 $, $\lim _{t \rightarrow \infty} x_{j}\left(t-\delta(t)\right)=0 $, $\lim _{t \rightarrow \infty} x_{j}\left(t-r(t)\right)=0 $. Then for any $\varepsilon>0$, there also exists a $T_{j}>0$ such that $t\geq T_{j}$ implies $\left|x_{j}\left(t-\tau(t)\right)\right|<\varepsilon $, $\left|x_{j}\left(t-\delta(t)\right)\right|<\varepsilon $, $\left|x_{j}\left(t-r(t)\right)\right|<\varepsilon $. Select $\overline{T}=\max _{j=1, \cdots, n}\left\{T_{j}\right\} $, we have
	\begin{eqnarray*}
		S_4(t)&\leq&\int_{0}^{\bar{T}}e^{-\int_{s}^{t}v_i(u)du}\left[\sum_{j=1}^{n}\overline{c}_{ij}(s)x_{j}(s)+\sum_{j=1}^{n}a_{ij}(s)f_{j}(x_{j}(s))+\sum_{j=1}^{n}b_{ij}(s)g_{j}(x_{j}(s-\delta(s)))\right. \\
		&&\left. +\sum_{j=1}^{n}w_{ij}(s)\int_{s-r(s)}^{s}h_{j}(x_{j}(u))\, du-\sum_{j=1}^{n}v_i(s)q_{ij}(s)x_j(s-\tau(s))\right]ds\\
		&&+\int_{\bar{T}}^{t}e^{-\int_{s}^{t}v_i(u)du}\left[\sum_{j=1}^{n}\overline{c}_{ij}(s)x_{j}(s)+\sum_{j=1}^{n}a_{ij}(s)f_{j}(x_{j}(s))+\sum_{j=1}^{n}b_{ij}(s)g_{j}(x_{j}(s-\delta(s)))\right. \\
		&&\left. +\sum_{j=1}^{n}w_{ij}(s)\int_{s-r(s)}^{s}h_{j}(x_{j}(u))\, du-\sum_{j=1}^{n}v_i(s)q_{ij}(s)x_j(s-\tau(s))\right]ds\\
\end{eqnarray*}
\begin{eqnarray*}
		&\leq&e^{-\eta_i t}\int_{0}^{\bar{T}}e^{-\int_{s}^{0}v_i(u)du}\left[\sum_{j=1}^{n}\overline{c}_{ij}(s)x_{j}(s)+\sum_{j=1}^{n}a_{ij}(s)f_{j}(x_{j}(s))+\sum_{j=1}^{n}b_{ij}(s)g_{j}(x_{j}(s-\delta(s)))\right. \\
		&&\left. +\sum_{j=1}^{n}w_{ij}(s)\int_{s-r(s)}^{s}h_{j}(x_{j}(u))\, du-\sum_{j=1}^{n}v_i(s)q_{ij}(s)x_j(s-\tau(s))\right]ds\\
		&&+ \frac{\varepsilon}{\eta_i}\left[\sum_{j=1}^{n}\sup_{ s\geq\vartheta}|\overline{c}_{ij}(s)|+\sum_{j=1}^{n}\sup_{ s\geq\vartheta}|a_{ij}(s)\alpha_j|+\sum_{j=1}^{n}\sup_{ s\geq\vartheta}|b_{ij}(s)\beta_j|+\sum_{j=1}^{n}\sup_{ s\geq\vartheta}|w_{ij}(s)\mu\gamma_j|\right. \\
		&&\left. +\sum_{j=1}^{n}\sup_{ s\geq\vartheta}|v_i(s)q_{ij}(s)|\right]\left(1-e^{\eta_i(\bar{T}-t)}\right), \\
	\end{eqnarray*}
	which implies $S_4(t)\rightarrow0$ as $t\rightarrow\infty$. 
	
	Therefore, we deduce $\pi(x_i)(t)\rightarrow0$ as $t\rightarrow\infty$ for $i=1, \cdots, n$. We conclude that $\pi(x_i)(t)\subset\mathcal{H}(i=1, \cdots, n)$ which means $\pi(\mathcal{H})\subseteq\mathcal{H}$. 
	
	\textbf{Step 3}. In order to use the Banach fixed point theorem, we need to prove $\pi$ is a contraction mapping. For any $y=\left(y_{1}(t), \cdots, y_{n}(t)\right)^{T} \in \mathcal{H}$ and $z=\left(z_{1}(t), \cdots, z_{n}(t)\right)^{T} \in \mathcal{H}$ , we have
	$$\pi\left(y_{i}\right)(t)-\pi\left(z_{i}\right)(t)= T_{1}(t)+T_{2}(t)+T_{3}(t), $$
	where
	\begin{eqnarray*}
		T_1(t)&=&\sum_{j=1}^{n}q_{ij}(t)y_{j}(t-\tau(t))-\sum_{j=1}^{n}q_{ij}(t)z_{j}(t-\tau(t)), \\
		T_2(t)&=&\sum_{0\leq t_k\leq t}I_{ik}(t_{k}, (Fx)_i(t_{k}))e^{-\int_{t_k}^{t}v_i(s)ds}-\sum_{0\leq t_k\leq t}I_{ik}(t_{k}, (Fx)_i(t_{k}))e^{-\int_{t_k}^{t}v_i(s)ds}, \\
		T_3(t)&=&\int_{0}^{t}e^{-\int_{s}^{t}v_i(u)du}\left[\sum_{j=1}^{n}\overline{c}_{ij}(s)y_{j}(s)+\sum_{j=1}^{n}a_{ij}(s)f_{j}(y_{j}(s))+\sum_{j=1}^{n}b_{ij}(s)g_{j}(y_{j}(s-\tau(s)))\right. \\
		&&\left. +\sum_{j=1}^{n}w_{ij}(s)\int_{s-r(s)}^{s}h_{j}(y_{j}(u))\, du-\sum_{j=1}^{n}v_i(s)q_{ij}(s)y_j(s-\delta(s))\right]ds\\
		&&-\int_{0}^{t}e^{-\int_{s}^{t}v_i(u)du}\left[\sum_{j=1}^{n}\overline{c}_{ij}(s)z_{j}(s)+\sum_{j=1}^{n}a_{ij}(s)f_{j}(z_{j}(s))+\sum_{j=1}^{n}b_{ij}(s)g_{j}(z_{j}(s-\tau(s)))\right. \\
		&&\left. +\sum_{j=1}^{n}w_{ij}(s)\int_{s-r(s)}^{s}h_{j}(z_{j}(u))\, du-\sum_{j=1}^{n}v_i(s)q_{ij}(s)z_j(s-\delta(s))\right]ds. 
	\end{eqnarray*}
	
	Note that 
	\begin{eqnarray*}
		|T_1(t)|&=&\left|\sum_{j=1}^{n}q_{ij}(t)\left[y_j(t-\tau(t))-z_j(t-\tau(t))\right]\right|
		\leq\max_{j=1, \cdots, n}\sup_{t\geq\vartheta}\left|q_{ij}(t)\right|\sum_{j=1}^{n}\left[\sup_{t\geq\vartheta}\left|y_j(t)-z_j(t)\right|\right], \\
		|T_2(t)|&=&\left|\sum_{0\leq t_k\leq t}\left[I_{ik}(t_{k}, (Fy)_i(t_{k}))-I_{ik}(t_{k}, (Fz)_i(t_{k}))\right]e^{-\int_{t_k}^{t}v_i(s)ds}\right|\\
		&\leq&\sum_{0\leq t_k\leq t}\left[|p_{ik}|e^{-\int_{t_k}^{t}v_i(s)ds}\sum_{j=1}^{n}\left|y_j(t_k)-z_j(t_k)\right|\right]\\
		&\leq&|p_i|\int_{0}^{t}e^{-\int_{s}^{t}v_i(u)du}ds\sum_{j=1}^{n}\left[\sup_{t\geq\vartheta}\left|y_j(t)-z_j(t)\right|\right], 
	\end{eqnarray*}
	\begin{eqnarray*}
		|T_3(t)|&\leq&\left[\max_{j=1, \cdots, n}\sup_{t\geq\vartheta}|\overline{c}_{ij}(t)|+\max_{j=1, \cdots, n}\sup_{t\geq\vartheta}|a_{ij}(t)\alpha_j|+\max_{j=1, \cdots, n}\sup_{t\geq\vartheta}|b_{ij}(t)\beta_j|+\max_{j=1, \cdots, n}\sup_{t\geq\vartheta}| w_{ij}(t)\mu\gamma_j|\right. \\
		&&\left. +\max_{j=1, \cdots, n}\sup_{t\geq\vartheta}|q_{ij}(t)v_i(t)|\right]\int_{0}^{t}e^{-\int_{s}^{t}v_i(u)du}ds\sum_{j=1}^{n}\left[\sup_{t\geq\vartheta}\left|y_j(t)-z_j(t)\right|\right]. 
	\end{eqnarray*}
	
	It follows that
	\begin{eqnarray*}
		&&\left|\pi\left(y_{i}\right)(t)-\pi\left(z_{i}\right)(t)\right|\\ 
		&&\quad\leq\left\{\max_{j=1, \cdots, n}\sup_{t\geq\vartheta}\left|q_{ij}(t)\right|+\left[\max_{j=1, \cdots, n}\sup_{t\geq\vartheta}|\overline{c}_{ij}(t)|+\max_{j=1, \cdots, n}\sup_{t\geq\vartheta}|a_{ij}(t)\alpha_j|+\max_{j=1, \cdots, n}\sup_{t\geq\vartheta}|b_{ij}(t)\beta_j|\right. \right. \\
		&&\quad\quad\left. \left. +\max_{j=1, \cdots, n}\sup_{t\geq\vartheta}| w_{ij}(t)\mu\gamma_j|+\max_{j=1, \cdots, n}\sup_{t\geq\vartheta}|q_{ij}(t)v_i(t)|+|p_i|\right]\int_{0}^{t}e^{-\int_{s}^{t}v_i(u)du}ds\right\}\sum_{j=1}^{n}\left[\sup_{t\geq\vartheta}\left|y_j(t)-z_j(t)\right|\right], 
	\end{eqnarray*}
	which implies
	\begin{eqnarray*}
		&&\sum_{i=1}^{n}\sup_{t\geq\vartheta}\left|\pi\left(y_{i}\right)(t)-\pi\left(z_{i}\right)(t)\right|\\ 
		&&\quad\leq\left\{\sum_{i=1}^{n}\left[\max_{j=1, \cdots, n}\sup_{t\geq\vartheta}\left|q_{ij}(t)\right|+\left(\max_{j=1, \cdots, n}\sup_{t\geq\vartheta}|\overline{c}_{ij}(t)|+\max_{j=1, \cdots, n}\sup_{t\geq\vartheta}|a_{ij}(t)\alpha_j|+\max_{j=1, \cdots, n}\sup_{t\geq\vartheta}|b_{ij}(t)\beta_j|\right. \right. \right. \\
		&&\quad\quad\left. \left. \left. +\max_{j=1, \cdots, n}\sup_{t\geq\vartheta}| w_{ij}(t)\mu\gamma_j|+\max_{j=1, \cdots, n}\sup_{t\geq\vartheta}|q_{ij}(t)v_i(t)|+|p_i|\right)\sup_{t\geq\vartheta}\int_{0}^{t}e^{-\int_{s}^{t}v_i(u)du}ds\right]\right\}\sum_{j=1}^{n}\left[\sup_{t\geq\vartheta}\left|y_j(t)-z_j(t)\right|\right]. \\	
	\end{eqnarray*}
	
	In view of condition (iv), we see $\pi$ is a contraction mapping. By the Banach fixed point theorem, we obtain that $\pi$ has a unique fixed point $x(t)$ in $\mathcal{H}$, which is a solution of (\ref{main equation}) with $x(t)=\varphi$ as $t\in[\vartheta, 0]$ and $x(t)\rightarrow 0$ as $t\rightarrow \infty$. 

	\textbf{Step 4}. To obtain the asymptotic stability, we still need to prove that the trivial solution $x=0$ is stable. 
	
	For any $\varepsilon>0$ , from condition (iv), we can find $\delta$ satisfying $0<\delta<\varepsilon$ such that $\delta+\rho \varepsilon \leq \varepsilon $. 
	Let $\|\varphi\|<\delta^{\prime}$ . 
	
	According to the above discussion, we know that there exists a unique solution 
	$$x(t , s, \varphi)=\left(x_{1}\left(t , s, \varphi_{1}\right), \cdots, x_{n}\left(t , s, \varphi_{n}\right)\right)^{T}. $$
	
	Moreover, let
	$$x_{i}(t)=\pi\left(x_{i}\right)(t)=S_{1}(t)+S_{2}(t)+S_{3}(t)+S_{4}(t), \quad t \geq 0, $$
	where $S_1(t)$, $S_2(t)$, $S_3(t)$, $S_4(t)$ are denoted by (\ref{S}). 
	
	Suppose there exists $t^{*}>0$ such that $\left\|x\left(t^{*} , s, \varphi\right)\right\|=\varepsilon$ and $\|x(t , s, \varphi)\|<\varepsilon $ for $ 0 \leq t<t^{*} $, we have
	$$|x_i(t^{*})|\leq|S_1(t^{*})|+|S_2(t^{*})|+|S_3(t^{*})|+|S_4(t^{*})|, $$
	then
	\begin{eqnarray*}
		|S_1(t^{*})|&\leq&\max_{j=1, \cdots, n}\sup_{t\geq\vartheta}|q_{ij}(t)|\left[\sum_{j=1}^{n}\left|x_{j}(t^{*}-\tau(t^{*}))\right|\right]
		<\max_{j=1, \cdots, n}\sup_{t\geq\vartheta}|q_{ij}(t)|\varepsilon, 
	\end{eqnarray*}
	\begin{eqnarray*}
		|S_2(t^{*})|&\leq&\left|\varphi_i(0)\right|e^{-\int_{0}^{t^{*}}v_i(s)ds}+\max_{j=1, \cdots, n}\left|q_{ij}(0)\right|e^{-\int_{0}^{t^{*}}v_i(s)ds}\sum_{j=1}^{n}\left|x_{j}(-\tau(0))\right|\\
		&<&\left|\varphi_i(0)\right|e^{-\int_{0}^{t^{*}}v_i(s)ds}+\max_{j=1, \cdots, n}\left|q_{ij}(0)\right|e^{-\int_{0}^{t^{*}}v_i(s)ds}\delta^{\prime}, 
	\end{eqnarray*}
	\begin{eqnarray*}
		|S_3(t^{*})|\leq\sum_{0\leq t_k\leq t^{*}}|p_i|(t_k-t_{k-1}) e^{-\int_{t_k}^{t^{*}}v_i(u)\, du}\sum_{j=1}^{n}|x_j(t_k)|
		&\leq &|p_i|\int_{0}^{t^{*}}e^{-\int_{s}^{t^{*}}v_i(u)\, du}ds\sup_{0\leq t_k \leq t^*}\sum_{j=1}^{n}|x_j(t_k)|\\
		&<&\varepsilon|p_i|\sup_{t\geq\vartheta}\int_{0}^{t}e^{-\int_{s}^{t}v_i(u)\, du}ds, 
	\end{eqnarray*}
	\begin{eqnarray*}
		|S_4(t^{*})|
		&<&\varepsilon\left[\max_{j=1, \cdots, n}\sup_{t\geq\vartheta}|\overline{c}_{ij}(t)|+\max_{j=1, \cdots, n}\sup_{t\geq\vartheta}|a_{ij}(t)\alpha_j|+\max_{j=1, \cdots, n}\sup_{t\geq\vartheta}|b_{ij}(t)\beta_j|\right. \\
		&&\left. +\max_{j=1, \cdots, n}\sup_{t\geq\vartheta}| w_{ij}(t)\mu\gamma_j|+\max_{j=1, \cdots, n}\sup_{t\geq\vartheta}|q_{ij}(t)v_i(t)|ds\right]\sup_{t\geq\vartheta}\int_{0}^{t}e^{-\int_{s}^{t}v_i(u)\, du}ds. 
	\end{eqnarray*}
	
	Hence, we have 
	\begin{eqnarray*}
		\|x(t^{*} , s, \varphi)\|
		=\sum_{i=1}^{n}\left|x_i(t^*)\right|
		&<&\sum_{i=1}^{n}\left[\left|\varphi_i(0)\right|e^{-\int_{0}^{t^{*}}v_i(s)ds}+\max_{j=1, \cdots, n}\left|q_{ij}(0)\right|e^{-\int_{0}^{t^{*}}v_i(s)ds}\delta^{\prime}\right]+\rho\varepsilon\\
		&<&\left[\left(1+\sum_{i=1}^{n}\max_{j=1, \cdots, n}\left|q_{ij}(0)\right|\right)\max_{i=1, \cdots, n}\sup_{t\geq \vartheta}e^{-\int_{0}^{t^{*}}v_i(s)ds}\right]\delta^{\prime}+\rho\varepsilon. 
	\end{eqnarray*}
	
Based on the definition of $\delta$ above, we can choose $$\delta=\left[\left(1+\sum_{i=1}^{n}\max_{j=1, \cdots, n}\left|q_{ij}(0)\right|\right)\max_{i=1, \cdots, n}\sup_{t\geq \vartheta}e^{-\int_{0}^{t^{*}}v_i(s)ds}\right]\delta^{\prime}$$ that is sufficiently small so that the above equation is less than $\varepsilon$. 
	
	This contradicts the assumption of $\|x(t^{*}, 0, \varphi)\|=\varepsilon$. Therefore, $\|x(t, 0, \varphi)\|<\varepsilon$ holds for all $t\ge0$. 
	This completes the proof. 
\end{proof}
Consider the case when there are no impulsive effects, the system {\rm(\ref{main equation})} reduced to the following
\begin{eqnarray}\label{main equation2}
&&\left\{
\begin{array}{ll}
d\left[x_{i}(t)-\sum_{j=1}^{n}q_{ij}(t)x_{j}(t-\tau(t))\right] \\
\quad =\left[\sum_{j=1}^{n}c_{ij}(t)x_{j}(t)+\sum_{j=1}^{n}a_{ij}(t)f_{j}(x_{j}(t))+\sum_{j=1}^{n}b_{ij}(t)g_{j}(x_{j}(t-\delta(t)))
+\sum_{j=1}^{n}w_{ij}(t)\int_{t-r(t)}^{t}h_j(x_{j}(s))\, ds\right]\, dt, \\
\qquad\qquad\qquad\qquad\qquad\qquad\qquad\qquad\qquad\qquad\qquad\qquad\qquad\qquad\qquad\qquad\qquad t\geq 0, \\
&\\
x(t)=\varphi(t), \quad \quad t\in[\vartheta, 0]. 
\end{array}
\right. 
\end{eqnarray}
\begin{corollary}\label{cor1}
	Consider the nonlinear neutral delayed system {\rm{(\ref{main equation2})}}. Suppose that the assumptions {\rm{ (A1)-(A3)}} hold and the following conditions are satisfied:
	\begin{enumerate}[(i)]
		\item the delay $r(t)$ is bounded by a positive constant $\mu$;
		\item there exist constants $\eta_i>0$ such that $v_i(t)>\eta_i$, $t\in \mathbb{R}^{+}$ for $i=1, 2, \cdots, n$;
		\item and such that 
		\begin{eqnarray*}
			\sum\limits_{i=1}^{n}\Bigg[\max_{j=1, \cdots, n}\sup_{t\geq\vartheta}|q_{ij}(t)|+\Bigg(\max_{j=1, \cdots, n}\sup_{t\geq\vartheta}|\overline{c}_{ij}(t)|+\max_{j=1, \cdots, n}\sup_{t\geq\vartheta}|a_{ij}(t)\alpha_j|+\max_{j=1, \cdots, n}\sup_{t\geq\vartheta}|b_{ij}(t)\beta_j|\\
			+\max_{j=1, \cdots, n}\sup_{t\geq\vartheta}| w_{ij}(t)\mu\gamma_j|+\max_{j=1, \cdots, n}\sup_{t\geq\vartheta}|q_{ij}(t)v_i(t)|\Bigg)\times\sup\limits_{t\geq\vartheta}\int_{0}^{t}e^{-\int_{s}^{t}v_i(u)du}ds\Bigg]<1. \qquad
		\end{eqnarray*}
	\end{enumerate}
	Then the trivial solution $x=0$ of system {\rm{(\ref{main equation2})}} is asymptotically stable. 

\end{corollary}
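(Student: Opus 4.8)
The plan is to obtain this corollary directly from Theorem 3.1 rather than repeat the fixed-point construction, since the impulse-free system (\ref{main equation2}) is precisely the special case of (\ref{main equation}) in which every jump vanishes. First I would observe that dropping the impulsive effects amounts to setting $\Delta x_i(t_k)=0$ for all $i$ and $k$, so that $I_{ik}(t_k,(Fx)_i(t_k))\equiv 0$. Consequently assumption (A3) holds with Lipschitz constants $p_{ik}=0$, and the integral representation (\ref{inteqn}) collapses: the impulsive sum $\sum_{0\le t_k\le t} I_{ik}(t_k,(Fx)_i(t_k))\,e^{-\int_{t_k}^{t}v_i(s)\,ds}$ disappears, leaving exactly the operator on which the argument of Theorem 3.1 operates.

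Next I would match the hypotheses one by one. Condition (i) of the corollary is identical to condition (i) of Theorem 3.1, and condition (ii) of the corollary, $v_i(t)>\eta_i$, is identical to condition (iii) of Theorem 3.1. Condition (ii) of Theorem 3.1, namely $p_{ik}\le p_i(t_k-t_{k-1})$, holds trivially with the choice $p_i=0$ since $p_{ik}=0$. With this choice the term $|p_i|$ appearing in condition (iv) of Theorem 3.1 vanishes, so that contraction bound reduces verbatim to the inequality stated in condition (iii) of the corollary. Hence all four hypotheses of Theorem 3.1 are satisfied.

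With the hypotheses in place, Theorem 3.1 applies directly: the operator $\pi$, now without its impulsive sum, maps the complete metric space $\mathcal{H}$ into itself and is a contraction with constant strictly less than $1$, so the Banach fixed point theorem furnishes a unique fixed point which solves (\ref{main equation2}), tends to $0$ as $t\to\infty$, and is stable in the sense of Definition 2.1. Therefore the trivial solution $x=0$ of (\ref{main equation2}) is asymptotically stable.

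The only point requiring care, and the closest thing to an obstacle in an otherwise immediate reduction, is the bookkeeping for the underlying function space. Without impulses each component becomes genuinely continuous on all of $[\vartheta,\infty)$, so conditions (2)--(3) in the definition of $\mathcal{H}_i$ degenerate; I would note that this affects neither the completeness of $(\mathcal{H},d)$ nor any of the estimates used in Steps 2--4 of the theorem. Once the correspondence $I_{ik}\equiv 0 \Longleftrightarrow p_i=0$ is recorded, the conclusion follows without further computation.
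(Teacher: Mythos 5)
Your reduction is correct and is exactly how the paper treats this corollary: it is stated without a separate proof as the specialization of Theorem 3.1 to the impulse-free case, obtained by taking $I_{ik}\equiv 0$, hence $p_{ik}=0$ and $p_i=0$, so that condition (iv) of the theorem collapses to condition (iii) of the corollary. Your additional remark that the function space degenerates to continuous functions without affecting completeness is a sensible (and harmless) piece of bookkeeping that the paper leaves implicit.
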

\begin{remark}
	 Chen et al.{\rm{\cite{GOS1}}} has studied asymptotic stability of a special case of the system {\rm{(\ref{main equation2})}}. The system studied in {\rm{\cite{GOS1}}} has no neutral term, and the coefficient are constants. Our results in Corollary {\rm{\ref{cor1}}} improve and extend the results in {\rm{\cite{GOS1}}}. 
\end{remark}

\section{Exponential Stability}
In this section, we study exponential stability of the system (\ref{main equation}) by employing the fixed point method. 

Let $\mathcal{H} =\mathcal{H}_1\times \cdots\times \mathcal{H}_n$, and let $\mathcal{H}_i (i=1, \cdots, n)$ be the space consisting of function $\phi_i(t):[\vartheta, \infty)\rightarrow\mathbb{R}$, where $\phi_{i}(t)$ satisfies the following:
\begin{enumerate}[(1)]
	\item $\phi_i(s)=\varphi_i(s)$ on $s\in[\vartheta, 0]$;
	\item $\phi_{i}(t)$ is continuous on $t\ne t_k(k=1, 2, \cdots)$;
	\item $\lim_{t\rightarrow t_k^{-}}\phi_{i}(t)$ and $\lim_{t\rightarrow t_k^{+}}\phi_{i}(t)$ exist, furthermore, $\lim_{t\rightarrow t_k^{+}}\phi_{i}(t)=\phi_i(t_k)$ for $k=1, 2, \cdots$;
	\item $e^{\lambda t}\phi_{i}(t)\rightarrow0$ as $t\rightarrow\infty$, where $\lambda<\min_{i=1, \cdots, n}\{\eta_i\}$, 
\end{enumerate}
where $t_k\left(k=1, 2, \cdots\right)$ and $\varphi_i(s)\left(s\in [-\vartheta, 0)\right)$ are defined as shown in Section 2. Also, if we define the metric as 
$d(\phi, \psi)=\sum_{i=1}^{n}\sup_{t\geq\vartheta}|\phi(t)-\psi(t)|, $
then $\mathcal{H}$ is a complete metric space. 
\begin{theorem}
	Consider the nonlinear impulsive neutral delayed system {\rm{(\ref{main equation})}}. Suppose that the assumptions {\rm{ (A1)-(A3)}} hold and the following conditions are satisfied:
	\begin{enumerate}[(i)]
		\item the delay $\delta(t)$, $\tau(t)$ and $r(t)$ are bounded by a positive constant $\mu$;
		\item there exist constants $p_i$ such that $p_{ik}\leq p_i(t_k-t_{k-1})$ for $i=1, 2, \cdots, n$, and $k=1, 2, \cdots$;
		\item there exist constants $\eta_i >0$ such that $v_i(t)>\eta _i$, $t\in \mathbb{R}^{+}$ for $i=1, 2, \cdots, n$;
		\item and such that 
		\begin{eqnarray*}
			&&\sum\limits_{i=1}^{n}\Bigg[\max_{j=1, \cdots, n}\sup_{t\geq\vartheta}|q_{ij}(t)|+\Bigg(\max_{j=1, \cdots, n}\sup_{t\geq\vartheta}|\overline{c}_{ij}(t)|+\max_{j=1, \cdots, n}\sup_{t\geq\vartheta}|a_{ij}(t)\alpha_j|+\max_{j=1, \cdots, n}\sup_{t\geq\vartheta}|b_{ij}(t)\beta_j|\\
			&&\quad+\max_{j=1, \cdots, n}\sup_{t\geq\vartheta}| w_{ij}(t)\mu\gamma_j|+\max_{j=1, \cdots, n}\sup_{t\geq\vartheta}|q_{ij}(t)v_i(t)|+|p_i|\Bigg)\times\sup\limits_{t\geq\vartheta}\int_{0}^{t}e^{-\int_{s}^{t}v_i(u)du}ds\Bigg]\triangleq \rho<1. 
		\end{eqnarray*}
	\end{enumerate}
	Then the trivial equilibrium $x=0$ of system {\rm{(\ref{main equation})}} is exponentially stable. 
	
\end{theorem}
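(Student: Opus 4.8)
The plan is to mirror the four-step strategy of Theorem 3.1, modifying each step so that it tracks the weighted quantity $e^{\lambda t}\phi_i(t)$ rather than $\phi_i(t)$ itself. As before, I would define the operator $\pi$ by the same integral formula (\ref{ASenq-1}), work on the complete metric space $\mathcal{H}$ whose fourth defining property is now $e^{\lambda t}\phi_i(t)\to 0$, and verify that $\pi$ maps $\mathcal{H}$ into itself and is a contraction. The main structural difference from the asymptotic-stability case is that in \textbf{Step 2} I must show $e^{\lambda t}\pi(x_i)(t)\to 0$, and in \textbf{Step 3} the contraction estimate must survive multiplication by $e^{\lambda t}$. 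The role of condition (i) is now stronger: all three delays $\tau,\delta,r$ are bounded by $\mu$, which is what lets me dominate shifted arguments like $e^{\lambda(t-\tau(t))}$ by $e^{\lambda\mu}e^{\lambda t}$ uniformly.

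For \textbf{Step 2} I would again split $\pi(x_i)(t)=S_1(t)+S_2(t)+S_3(t)+S_4(t)$ and show each $e^{\lambda t}S_m(t)\to 0$. The term $S_2$ decays like $e^{-\int_0^t v_i}$, and since $v_i(t)>\eta_i>\lambda$ by condition (iii) together with $\lambda<\min_i\{\eta_i\}$, the product $e^{\lambda t}S_2(t)$ still tends to zero. For $S_1$ I would use boundedness of $\tau$ to write $e^{\lambda t}|S_1(t)|\le e^{\lambda\mu}\max_j\sup|q_{ij}|\sum_j e^{\lambda(t-\tau(t))}|x_j(t-\tau(t))|$, and since $x\in\mathcal{H}$ forces $e^{\lambda s}x_j(s)\to0$, this vanishes. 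The integral term $S_4$ and the impulsive sum $S_3$ are the delicate ones: here I would insert $e^{\lambda t}=e^{\lambda s}e^{\lambda(t-s)}$ inside the integral/sum and use that $e^{-\int_s^t v_i(u)\,du}\le e^{-\eta_i(t-s)}$, so that $e^{\lambda(t-s)}e^{-\eta_i(t-s)}=e^{-(\eta_i-\lambda)(t-s)}$ is an integrable decaying kernel; combined with $e^{\lambda s}x_j(s)\to0$ (and its delayed analogues, controlled again by the bound $\mu$), a split of the integral at a large $\bar T$ gives the claimed decay.

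For \textbf{Step 3} the key observation is that the contraction constant is unchanged. Writing $\pi(y_i)-\pi(z_i)=T_1+T_2+T_3$ as in the theorem, I would estimate $e^{\lambda t}|T_m(t)|$ using the same insertions $e^{\lambda t}=e^{\lambda(\text{shift})}e^{-(\eta_i-\lambda)(\cdot)}\cdots$, and observe that each appearance of a delayed difference $y_j(\cdot)-z_j(\cdot)$ becomes, after weighting, bounded by $\sup_{t\ge\vartheta}e^{\lambda t}|y_j(t)-z_j(t)|$ times an exponential factor that is absorbed by the same $\sup_t\int_0^t e^{-\int_s^t v_i}\,ds$ appearing in condition (iv). Thus the contraction coefficient is again exactly $\rho<1$, and Banach's fixed point theorem yields a unique fixed point $x\in\mathcal{H}$, which by the fourth property of $\mathcal{H}$ satisfies $e^{\lambda t}x_i(t)\to0$, hence $\|x(t)\|=O(e^{-\lambda t})$. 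I expect the main obstacle to be \textbf{Step 2}, and specifically making the exponential bookkeeping rigorous when the bounded-delay substitution is applied simultaneously to $\tau,\delta,r$ inside $S_4$; keeping the weight $e^{\lambda t}$ correctly distributed between the decaying kernel $e^{-(\eta_i-\lambda)(t-s)}$ and the vanishing factor $e^{\lambda s}x_j(s)$ is where the proof must be handled with care. A final bookkeeping point is to recover the explicit constants $C,\lambda$ of Definition 2.3 from the fixed-point estimate, bounding $\sup_{t\ge\vartheta}e^{\lambda t}|x_i(t)|$ linearly in $\|\varphi\|$ by feeding the initial data through the same estimates used in \textbf{Step 4} of Theorem 3.1.
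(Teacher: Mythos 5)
Your proposal follows essentially the same route as the paper: the same operator $\pi$ from (\ref{ASenq-1}) on the same space $\mathcal{H}$ with fourth property $e^{\lambda t}\phi_i(t)\to 0$, the same decomposition $\pi(x_i)=S_1+S_2+S_3+S_4$ with each $e^{\lambda t}S_m(t)\to 0$ via $v_i(t)>\eta_i>\lambda$, the factor $e^{\lambda\mu}$ from the bounded delays, the kernel $e^{-(\eta_i-\lambda)(t-s)}$, and a split of the integral at a large $\bar T$, followed by Banach's theorem. The one divergence is Step 3: the paper keeps the \emph{unweighted} metric $d(\phi,\psi)=\sum_i\sup_t|\phi_i(t)-\psi_i(t)|$ so the Section~3 contraction estimate carries over verbatim with constant $\rho$, whereas your plan to estimate $e^{\lambda t}|T_m(t)|$ against $\sup_t e^{\lambda t}|y_j(t)-z_j(t)|$ would pick up extra factors $e^{\lambda\mu}$ on the delayed differences, so the constant is not ``exactly $\rho$'' as you claim but only $<1$ after shrinking $\lambda$ --- a fixable but real bookkeeping point.
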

\begin{proof}
	The following proof is based on the contraction mapping principle, which can be divided into three steps. \\
	
	\textbf{Step 1}. We define the following operator $\pi$ acting on $\mathcal{H}$, for $\overline{x}(t)=(x_1(t), \cdots, x_n(t))^{T}\in\mathcal{H}$:
	$$\pi(\overline{x})(t)=(\pi(x_1)(t), \cdots, \pi(x_n)(t))^{T}, $$
	where $\pi(x_i)(t):[\vartheta, \infty)\rightarrow\mathbb{R}(i=1, 2, \cdots, n)$ obeys the rules as follows:
	\begin{eqnarray}\label{ASenq}
		\pi(x_i)(t)&=&\sum_{j=1}^{n}q_{ij}(t)x_{j}(t-\tau(t))+\left[\varphi_i(0)-\sum_{j=1}^{n}q_{ij}(0)x_{j}(-\tau(0))\right]e^{-\int_{0}^{t}v_i(s)ds}+\sum_{0\leq t_k\leq t}I_{ik}(t_{k}, (Fx)_i(t_{k}))e^{-\int_{t_k}^{t}v_i(s)ds}\nonumber\\
		&&+\int_{0}^{t}e^{-\int_{s}^{t}v_i(u)du}\left[\sum_{j=1}^{n}\overline{c}_{ij}(s)x_{j}(s)+\sum_{j=1}^{n}a_{ij}(s)f_{j}(x_{j}(s))+\sum_{j=1}^{n}b_{ij}(s)g_{j}(x_{j}(s-\delta(s)))\right. \\
		&&\left. +\sum_{j=1}^{n}w_{ij}(s)\int_{s-r(s)}^{s}h_{j}(x_{j}(u))\, du-\sum_{j=1}^{n}v_i(s)q_{ij}(s)x_j(s-\tau(s))\right]ds, \nonumber
	\end{eqnarray}
	on $t\geq0$ and $\pi(x_i)(s)=\varphi_i(s)$ on $s\in[\vartheta, 0)$. 
	
	\textbf{Step 2}. Similar to the proof in Section 3, we know that $x_i(s)=\varphi_i(s)$ on $s\in[\vartheta, 0]$, 
	$x_{i}(t)$ is continuous on $t\ne t_k(k=1, 2, \cdots)$, 
	$\lim_{t\rightarrow t_k^{-}}x_{i}(t)$ and $\lim_{t\rightarrow t_k^{+}}x_{i}(t)$ exist, Furthermore, $\lim_{t\rightarrow t_k^{+}}x_{i}(t)=x_i(t_k)$ for $k=1, 2, \cdots$. 
	
	Next, we need to prove $e^{\lambda t}x_{i}(t)\rightarrow0$ as $t\rightarrow\infty$. 
	
	For convenience, denote
	$$\pi(x_i)(t)=S_1(t)+S_2(t)+S_3(t)+S_4(t), $$
	where $S_1(t)$, $S_2(t)$, $S_3(t)$, $S_4(t)$ are denoted by (\ref{S}). 
	
	Since $x_j(t)\in\mathcal{H}$ for $j=1, \cdots, n$, we know $\lim_{t\rightarrow\infty}e^{\lambda t}x_j(t)=0$. Then for any $\varepsilon>0$, there exists a $T_j>0$ such that $t\ge T_j$ implies $|e^{\lambda t}x_j(t)|<\varepsilon$. Choose $T^*=\max_{j=1, \cdots, n}\{T_j\}+\mu$, let $t>T^*$, then
	\begin{eqnarray*}
		e^{\lambda t}S_1(t)\leq e^{\lambda \tau(t)}\sum_{j=1}^{n}q_{ij}(t)e^{\lambda (t-\tau(t))}x_{j}(t-\tau(t))
		\leq\varepsilon e^{\lambda\mu}\sum_{j=1}^{n}\sup_{ t\geq\vartheta}|q_{ij}(t)|, 
	\end{eqnarray*} 
	which leads to $e^{\lambda t}S_1(t)\rightarrow0$ as $t\rightarrow\infty$. 
	
	By condition (iii), $e^{-\int_{0}^{t}\left[v_i(s)-\lambda\right]ds}\rightarrow0$ as $t\rightarrow\infty$, then we have
	\begin{eqnarray*}
		e^{\lambda t}S_2(t)&\leq &\left[\varphi_i(0)-\sum_{j=1}^{n}q_{ij}(0)x_{j}(-\tau(0))\right]e^{-\int_{0}^{t}\left[v_i(s)-\lambda\right]ds}, 
	\end{eqnarray*}
	which implies $e^{\lambda t}S_2(t)\rightarrow0$ as $t\rightarrow\infty$. 
	
	Then for any $\varepsilon>0$, there exists a nonimpulsive point $T_i>0$ such that $t>T_i$ implies $|e^{\lambda t}x_i(t)|<\varepsilon$, then
	\begin{eqnarray*}
		e^{\lambda t}S_3(t)&\leq&e^{\lambda t}\sum_{0\leq t_k\leq T_i}p_i(t_k-t_{k-1}) e^{-\int_{t_k}^{t}v_i(s)ds}\sum_{j=1}^{n}|x_j(t_k)|+e^{\lambda t}\sum_{T_i<t_k\leq t}p_i(t_k-t_{k-1}) e^{-\int_{t_k}^{t}v_i(s)ds}\sum_{j=1}^{n}|x_j(t_k)|\\
		&\leq&e^{-\int_{0}^{t}\left[v_i(s)-\lambda\right]ds}\sum_{0\leq t_k\leq T_i}p_i(t_k-t_{k-1}) e^{-\int_{t_k}^{0}v_i(s)ds}\sum_{j=1}^{n}|x_j(t_k)|+np_i\varepsilon\int_{T_i}^{t}e^{-\int_{s}^{t}v_i(u)du}ds\\
		&\leq&e^{-\int_{0}^{t}\left[v_i(s)-\lambda\right]ds}\sum_{0\leq t_k\leq T_i}p_i(t_k-t_{k-1}) e^{-\int_{t_k}^{0}v_i(s)ds}\sum_{j=1}^{n}|x_j(t_k)|+np_i\varepsilon\frac{1}{\eta_i}-np_i\varepsilon\frac{1}{\eta_i}e^{-\eta_it+\eta_iT_i}, 
	\end{eqnarray*}
	which implies $e^{\lambda t}S_3(t)\rightarrow0$ as $t\rightarrow\infty$. 
	
	Let $T^*=\max_{j=1, \cdots, n}\{T_j\}+\mu$, and $t>T^*$, we have
	\begin{eqnarray*}
		e^{\lambda t}S_4(t)
&\leq&e^{-\int_{0}^{t}(v_i(u)-\lambda)du}\int_{0}^{T^*}e^{-\int_{s}^{0}(v_i(u))du}\left[\sum_{j=1}^{n}\overline{c}_{ij}(s)x_{j}(s)+\sum_{j=1}^{n}a_{ij}(s)f_{j}(x_{j}(s))+\sum_{j=1}^{n}b_{ij}(s)g_{j}(x_{j}(s-\delta(s)))\right. \\
		&&+\sum_{j=1}^{n}w_{ij}(s)\int_{s-r(s)}^{s}h_{j}(x_{j}(u))\, duds
		\left. -\sum_{j=1}^{n}v_i(s)q_{ij}(s)x_j(s-\tau(s))\right]ds\\
&&+\frac{\varepsilon}{\eta_i-\lambda} \Bigg[\sum_{j=1}^{n}\sup_{ s\geq\vartheta}|\overline{c}_{ij}(s)|
	 +\sum_{j=1}^{n}\sup_{ s\geq\vartheta}|a_{ij}(s)\alpha_j|+e^{\lambda\mu}\sum_{j=1}^{n}\sup_{ s\geq\vartheta}|b_{ij}(s)\beta_j|+e^{\lambda\mu}\sum_{j=1}^{n}\sup_{ s\geq\vartheta}|w_{ij}(s)\mu\gamma_j|\\
&&+e^{\lambda\mu}\sum_{j=1}^{n}\sup_{ s\geq\vartheta}|v_i(s)q_{ij}(s)|\Bigg](1-e^{(\eta_i-\lambda)(T^*-t)}), 
	\end{eqnarray*}
	where $\xi(s)$ satisfies $s-r(s)\leq s-\xi(s)\leq s$, which results in $e^{\lambda t}S_4(t)\rightarrow0$ as $t\rightarrow\infty$. 
	
	Therefore, deduce $e^{\lambda t}\pi(x_i)(t)\rightarrow0$ as $t\rightarrow\infty$ for $i=1, \cdots, n$. We conclude that $\pi(x_i)(t)\subseteq\mathcal{H}(i=1, \cdots, n)$ which means $\pi(\mathcal{H})\subseteq\mathcal{H}$. 
	
	\textbf{Step 3}. Similar to the proof in Section 3, we see that $\pi$ is a contraction
	mapping, thus there exists a unique fixed point ${x}^{*}(\cdot)$ of
	$\pi$ in $\mathcal{H}$, which means $e^{\lambda t}\|{x}^{*}(\cdot)\|\rightarrow0$ as $t\rightarrow\infty$. 
	This completes the proof. 
\end{proof}
\begin{remark}
Zhang et al.{\rm{\cite{ZYT-1}}} and Chen et al.{\rm{\cite{GOS3}}} have investigated exponential stability of a special case of {\rm{(\ref{main equation})}} by using fixed point theory. The system studied in {\rm{\cite{ZYT-1, GOS3}}} has no neutral term, and all the coefficients are constants. Our results in Theorem {\rm{4.1}} improve and extend the result in {\rm{\cite{ZYT-1, GOS3}}}. 
\end{remark}

Consider the case when there are no impulsive effects, we obtain the following corollary. 
\begin{corollary}\label{cor2}
	Consider the nonlinear neutral delayed system {\rm{(\ref{main equation2})}}. Suppose that the assumptions {\rm{ (A1)-(A3)}} hold and the following conditions are satisfied:
	\begin{enumerate}[(i)]
		\item the delay $\delta(t)$, $\tau(t)$ and $r(t)$ are bounded by a positive constant $\mu$;
		\item there exist constants $\eta_i >0$ such that $v_i(t)>\eta _i$, $t\in \mathbb{R}^{+}$ for $i=1, 2, \cdots, n$;
		\item and such that 
		\begin{eqnarray*}
			\sum\limits_{i=1}^{n}\Bigg[\max_{j=1, \cdots, n}\sup_{t\geq\vartheta}|q_{ij}(t)|+\Bigg(\max_{j=1, \cdots, n}\sup_{t\geq\vartheta}|\overline{c}_{ij}(t)|+\max_{j=1, \cdots, n}\sup_{t\geq\vartheta}|a_{ij}(t)\alpha_j|+\max_{j=1, \cdots, n}\sup_{t\geq\vartheta}|b_{ij}(t)\beta_j|\\
			+\max_{j=1, \cdots, n}\sup_{t\geq\vartheta}| w_{ij}(t)\mu\gamma_j|+\max_{j=1, \cdots, n}\sup_{t\geq\vartheta}|q_{ij}(t)v_i(t)|\Bigg)\times\sup\limits_{t\geq\vartheta}\int_{0}^{t}e^{-\int_{s}^{t}v_i(u)du}ds\Bigg]<1. 
		\end{eqnarray*}
	\end{enumerate}
	Then the trivial equilibrium $x=0$ of system {\rm{(\ref{main equation2})}} is exponentially stable. 
\end{corollary}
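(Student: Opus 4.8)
The plan is to obtain Corollary \ref{cor2} as the impulse-free specialization of the exponential stability result just established, so that no fresh fixed-point argument is required. The starting observation is that system (\ref{main equation2}) is exactly (\ref{main equation}) after deleting the impulsive jumps; hence along any solution the points $t_k$ are inactive and the impulse functionals vanish identically, $I_{ik}(t_k,(Fx)_i(t_k))\equiv 0$. Consequently assumption (A3) holds trivially with all Lipschitz constants $p_{ik}=0$, so hypothesis (ii) of Theorem 4.1 is met by taking $p_i=0$ for every $i$, while hypotheses (i) and (ii) of the corollary coincide with hypotheses (i) and (iii) of Theorem 4.1.

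With this choice I would first re-examine the ambient space: in the absence of impulse points, conditions (2)--(3) defining $\mathcal{H}_i$ reduce to plain continuity of $\phi_i$ on $[\vartheta,\infty)$, while the decay requirement $e^{\lambda t}\phi_i(t)\to 0$ in condition (4) is kept unchanged, so $\mathcal{H}$ remains a complete metric space under the same metric $d$. The operator $\pi$ from (\ref{ASenq}) then loses its impulse sum $S_3(t)=\sum_{0\le t_k\le t}I_{ik}(t_k,(Fx)_i(t_k))e^{-\int_{t_k}^{t}v_i(s)\,ds}$, which is now identically zero, leaving $\pi(x_i)(t)=S_1(t)+S_2(t)+S_4(t)$ with $S_1,S_2,S_4$ as in (\ref{S}).

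Next I would verify that the two substantive steps of Theorem 4.1 survive verbatim. The invariance estimate $\pi(\mathcal{H})\subseteq\mathcal{H}$ goes through because each of $e^{\lambda t}S_1$, $e^{\lambda t}S_2$, $e^{\lambda t}S_4$ was already shown to tend to zero without any reference to the impulses, and the contraction estimate of Step 3 simply drops the term $|p_i|\int_0^t e^{-\int_s^t v_i(u)\,du}\,ds$. After setting $p_i=0$, the resulting contraction constant is precisely the left-hand side of hypothesis (iii) of the corollary, which is assumed strictly less than $1$; Theorem 2.4 then furnishes a unique fixed point $x^*$ with $e^{\lambda t}\|x^*(t)\|\to 0$, i.e. the exponential decay demanded by Definition 2.3 with $\lambda<\min_i\eta_i$.

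There is no genuine obstacle here: the argument is essentially bookkeeping, and the single point deserving care is confirming that deleting the $|p_i|$ contribution from hypothesis (iv) of Theorem 4.1 reproduces hypothesis (iii) of the corollary exactly. Should a self-contained proof be preferred over citing Theorem 4.1, one instead repeats Steps 1--3 of its proof word for word with the $S_3$ term and every factor $p_i$ removed, and the same conclusion follows.
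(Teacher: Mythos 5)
Your proposal is correct and matches the paper's (implicit) derivation exactly: the paper states Corollary \ref{cor2} as the impulse-free specialization of Theorem 4.1, which is precisely your argument of setting $I_{ik}\equiv 0$, hence $p_{ik}=p_i=0$, so that the hypotheses and the contraction constant of Theorem 4.1 reduce to those of the corollary. The bookkeeping you describe (dropping $S_3$ and the $|p_i|$ term) is all that is needed.
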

\begin{remark}
Several exponential stability results {\rm{\cite{LLSZ, TZW, TZ}}}were provided for the special case of the system {\rm{(\ref{main equation2})}}, 
by constructing an appropriate Lyapunov functional and employing linear matrix inequality
(LMI) method. However, the delays in those results should satisfy the following condition: 

{\rm{(H)}} the discrete delay $\tau(t)$ is differentiable function and $r(t)$ in the distributed delay is nonnegative and bounded, that is, there exist constants $\tau_{M}$, $\zeta$, $r_{M}$ such that 

$$0\leq \tau(t)\leq\tau_{M}, \quad \tau'(t)\leq \zeta, \quad r(t)\leq r_{M}. $$

From our results, we provide other assumptions. The delays in our results are required to be bounded. Furthermore, Crollary {\rm{\ref{cor2}}} is an extension and improvement of the results in Chen et al.{\rm{\cite{GOS1}}} and Lai and Zhang{\rm{\cite{LZ}}}. 
\end{remark}

\section{Examples}
\begin{example}
	Consider the following two-dimensional impulsive neutral differential equations
	\begin{eqnarray}\label{example1}
		&&\left\{
		\begin{array}{ll}
			d\left[x(t)-Q(t)x(t-\tau(t))\right]=\left[Cx(t)+B(t)g(x(t-\delta(t)))+W\int_{t-r(t)}^{t}h(x(s))\, ds\right]\, dt, \quad t\geq 0, \quad t\neq t_{k}, \\
			&\\
			\Delta x(t_{k})=x(t_{k})-x(t^{-}_{k}), \quad t=t_{k}, \quad k=1, 2, 3, \cdots, 
		\end{array}
		\right. 
	\end{eqnarray}
	where 
	$$Q(t)=\begin{bmatrix}
		0. 1sin(t)&0\\
		0&0. 1cos(t)
	\end{bmatrix}, \quad C=\begin{bmatrix}
		-16&2. 5 \\
		1. 5&-16
	\end{bmatrix}, \quad B(t)=\begin{bmatrix}
		\frac{0. 3}{1+e^{-t}}&0 \\
		0&\frac{0. 4}{1+e^{-t}}
	\end{bmatrix}, \quad W=\begin{bmatrix}
		0. 2&0 \\
		0&0. 5
	\end{bmatrix}. $$
	with the initial conditions $x_1(t)=cos(t)$ , $x_2(t)=sin(t)$ on $-1\leq t\leq 0$ , where $\tau(t), \delta(t), r(t)=0. 2$ , $g_i(x)=\frac{|x+1|-|x-1|}{2}$ , $h_i(x)=sin(x)$ , $I_{ik}(t_k, (Fx)_i(tk))=arctan(0. 4x_i(t_k))$ , $t_k=t_{k-1}+0. 5k$ , $i=1, 2$ and $k=1, 2, \cdots$. 
\end{example}

We select $v_i(t)=16$ , it is clear that $\beta_i=\gamma_i=1$ , $p_{ik}=0. 4$ , $p_i=0. 8$ , $\eta_i=16$ for $i=1, 2$ and $k=1, 2, \cdots$. 

We check the condition (iv) in Theorem 3.1, 
\begin{eqnarray*}
			&&\sum\limits_{i=1}^{n}\Bigg[\max_{j=1, \cdots, n}\sup_{t\geq\vartheta}|q_{ij}(t)|+\Bigg(\max_{j=1, \cdots, n}\sup_{t\geq\vartheta}|\overline{c}_{ij}(t)|+\max_{j=1, \cdots, n}\sup_{t\geq\vartheta}|b_{ij}(t)\beta_j|+\max_{j=1, \cdots, n}\sup_{t\geq\vartheta}| w_{ij}(t)\mu\gamma_j|\\
			&&\quad+\max_{j=1, \cdots, n}\sup\limits_{\substack{j=1, \cdots, n\\t\geq\vartheta}}|q_{ij}(t)v_i(t)|+|p_i|\Bigg)\times\sup\limits_{t\geq\vartheta}\int_{0}^{t}e^{-\int_{s}^{t}v_i(u)du}ds\Bigg]=0. 7925<1. 
\end{eqnarray*}

 Hence, by using Theorem 3.1, we obtain that the trivial solution of (\ref{example1}) is asymptotically stable. Similarly, by using Theorem 4.1, the trivial solution of (\ref{example1}) is exponentially stable.

\begin{example}
	Consider the following two-dimensional impulsive neutral differential equations
	\begin{eqnarray} \label{example2}
		&&\left\{
		\begin{array}{ll}
			d\left[x(t)-Q(t)x(t-\tau(t))\right]=\left[Cx(t)+A(t)f(x(t))+B(t)g(x(t-\delta(t)))\right]\, dt, \quad t\geq 0, \quad t\neq t_{k}, \\
			&\\
			\Delta x(t_{k})=x(t_{k})-x(t^{-}_{k}), \quad t=t_{k}, \quad k=1, 2, 3, \cdots, 
		\end{array}
		\right. 
	\end{eqnarray}
	where 
	$$Q(t)=\begin{bmatrix}
		\frac{1}{8}sin^3(t)&0\\
		0&0.2sin(t)
	\end{bmatrix}, C=\begin{bmatrix}
		-18&-\frac{\sqrt{3}}{3} \\
		\frac{\sqrt{3}}{3}&-20
	\end{bmatrix}, A(t)=\begin{bmatrix}
		\frac{0. 01}{1+t}&0 \\
		0&0. 01e^{-t}
	\end{bmatrix}, B(t)=\begin{bmatrix}		
		\frac{999}{1000}cos(t)sin(2t)&0 \\
		0&\frac{999}{1000}cos^2(t)
	\end{bmatrix}. $$
	with the initial conditions $x_1(t)=0. 575t-0. 5$ , $x_2(t)=0. 7cos(t)$ on $-1\leq t\leq 0$ , where $\tau(t), \delta(t), r(t)=0. 2|sin(t)|$ , $f_i(x)=0. 2tanh(2x)$ , $g_i(x)=0. 6x$ , $I_{ik}(t_k, (Fx)_i(t_k))=arctan(0. 4x_i(t_k))$ , $t_k=t_{k-1}+0. 5k$ , $i=1, 2$ and $k=1, 2, \cdots$. 
	
\end{example}

Let $v_i(t)=20$, we have $\alpha=0. 4$, $\beta=0. 6$, $p_{ik}=0. 4$, $p_i=0. 8$, $\eta=20$ for $i=1, 2$ and $k=1, 2, \cdots$. 

Consequently, 
\begin{eqnarray*}
		&&\sum\limits_{i=1}^{n}\Bigg[\max_{j=1, \cdots, n}\sup_{t\geq\vartheta}|q_{ij}(t)|+\Bigg(\max_{j=1, \cdots, n}\sup_{t\geq\vartheta}|\overline{c}_{ij}(t)|+\max_{j=1, \cdots, n}\sup_{t\geq\vartheta}|b_{ij}(t)\beta_j|+\max_{j=1, \cdots, n}\sup_{t\geq\vartheta}| w_{ij}(t)\mu\gamma_j|\\
	&&\quad+\max_{j=1, \cdots, n}\sup\limits_{\substack{j=1, \cdots, n\\t\geq\vartheta}}|q_{ij}(t)v_i(t)|+|p_i|\Bigg)\times\sup\limits_{t\geq\vartheta}\int_{0}^{t}e^{-\int_{s}^{t}v_i(u)du}ds\Bigg]=0. 8832<1. 
\end{eqnarray*}

Then the condition (iv) in Theorem 3.1 holds, we conclude that the trivial solution of this two-dimensional impulsive neutral is asymptotically stable. Moreover, the trivial solution of (\ref{example2}) is exponentially stable. 

\begin{figure}[h]
	\centering
	\begin{minipage}{0.49\linewidth}
		\centering
		\includegraphics[scale=0.63]{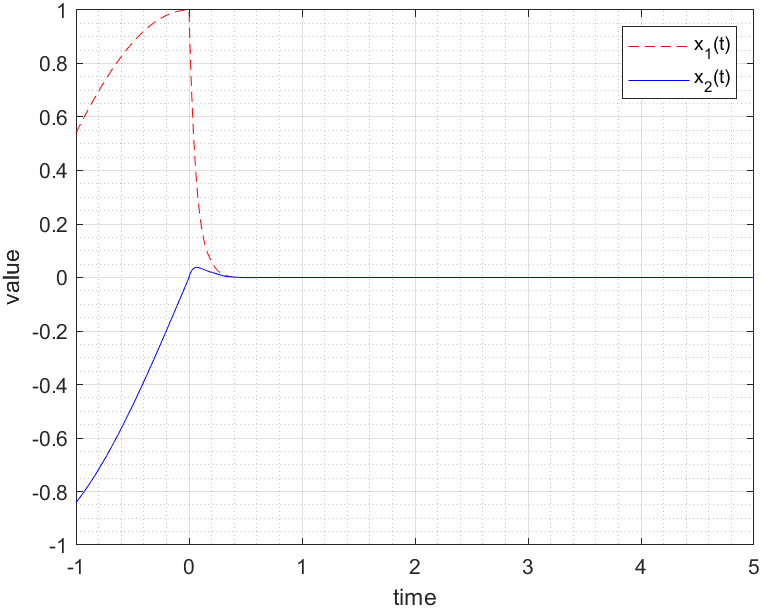}
		\caption{The solution of Example 5.1}
	\end{minipage}
	\begin{minipage}{0.49\linewidth}
		\centering
		\includegraphics[scale=0.63]{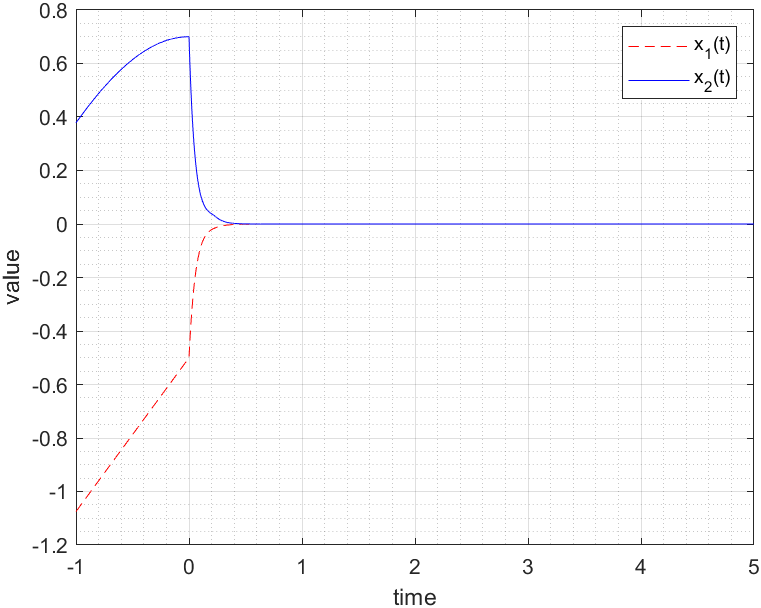}
		\caption{The solution of Example 5.2}
	\end{minipage}
\end{figure}

\end{document}